\newtheorem{theorem}{Theorem}[section]
\newtheorem{corollary}[theorem]{Corollary}
\newtheorem{lemma}[theorem]{Lemma}
\newtheorem{conjecture}[theorem]{Conjecture}
\newtheorem{remark}[theorem]{Remark}
\newtheorem{example}[theorem]{Example}
\newtheorem{question}{Problem}
\newcommand{\cc}{\mathcal}
\newcommand\crk{\operatorname{cr}}
      \def\@setcopyright{}
      \def\serieslogo@{}
\begin{document}
\thispagestyle{empty}

   \author{Amin  Bahmanian}
   \address{Department of Mathematics,
  Illinois State University, Normal, IL USA 61790-4520}

   \author{Sadegheh Haghshenas}
   \address{Department of Mathematics,
  Illinois State University, Normal, IL USA 61790-4520}
  \email{shaghsh@ilstu.edu}

\title[Partitioning The Edge Set of a Hypergraph Into Almost Regular Cycles]{Partitioning The Edge Set of a Hypergraph Into Almost Regular Cycles}

\begin{abstract}
A  cycle of length $t$ in a hypergraph is an alternating sequence $v_1,e_1,v_2\dots,v_t,e_t$ of distinct vertices $v_i$ and distinct edges $e_i$  so that  $\{v_i,v_{i+1}\}\subseteq e_i$ (with $v_{t+1}:=v_1$). Let $\lambda K_n^h$ be the  $\lambda$-fold $n$-vertex complete $h$-graph. Let $\cc G=(V,E)$ be a hypergraph all of whose edges are of size at least $h$, and  $2\leq c_1\leq \dots\leq c_k\leq |V|$.
In order to partition the edge set of  $\mathcal G$ into cycles of specified lengths $c_1, \dots, c_k$, an obvious necessary condition is that $\sum_{i=1}^k c_i=|E|$. We show that this condition is sufficient in the following cases. 
\begin{enumerate}
\item [(R1)] $h\geq \max\{c_k, \lceil n/2 \rceil+1\}$.
\item [(R2)] $\mathcal G=\lambda K_n^h$, $h\geq \lceil n/2 \rceil+2$.
\item [(R3)] $\mathcal G=K_n^h$, $c_1= \dots=c_k:=c$, $c|n(n-1), n\geq 85$.
\end{enumerate}
In (R2), we  guarantee that each cycle is almost regular. In (R3), we also solve the case where a ``small" subset $L$ of edges of $K_n^h$ is removed.
\end{abstract}
   \keywords{Cycle, circle, partition, almost regular, Hamiltonicity, Baranyai's Theorem, Complete uniform hypergraph, Kruskal-Katona theorem}
   \date{\today}
   \maketitle

\section{Introduction}
In order to decompose (i.e. partition the edge set of) a graph $G=(V,E)$ into $k$ cycles of specified lengths $c_1,\dots,c_k$ with $3\leq c_1\leq \dots\leq c_k\leq |V|$, it is clearly necessary that $\sum_{i=1}^k c_i=|E|$, and that the degree of each vertex is even. In a recent breakthrough by Bryant et. al. \cite[Theorem 1]{MR3214677}, it is shown that these conditions are indeed sufficient for the case when $G$ is the  complete $n$-vertex graph, $K_n$, which settles an old conjecture of Alspach.

The {\it corank} of a hypergraph $\mathcal G=(V,E)$, written, $\crk(\mathcal G)$, is $\min \{|e|: e\in E\}$. The {\it degree} of a vertex is the number of edges containing it, and $\mathcal G$ is {\it regular} 
if all its vertices have the same degree. 
We call a hypergraph {\it almost regular} if there is an integer $d$ such that every vertex has degree $d$ or $d+1$. 
In this paper, our goal is to investigate cycle decompositions of  hypergraphs, and in particular we are interested to investigate the following problem.
\begin{question} \label{prob1cyc}
 Find the necessary and sufficient conditions for the existence of a decomposition of the complete $\lambda$-fold $n$-vertex $h$-graph,  $\lambda K_n^h$,  into $k$ regular cycles of specified lengths $c_1,\dots,c_k$. 
\end{question}
This problem is not precisely formulated, because it is not clear what the definition of `cycle' in a hypergraph should be. Let $\mathcal G=(V,E)$ be a hypergraph (possibly with multiple edges)  with $\crk(\mathcal G):=h$. A careful reader will notice that all of the following are potential definitions for $\mathcal G$ to be a cycle.
\begin{enumerate}[(I)]
\item There exists an alternating sequence $v_1,e_1,v_2,\dots,v_t,e_t$ of distinct vertices $v_i$, and distinct edges $e_i$ with $V\supseteq \{v_1,\dots,v_t\}, E= \{e_1,\dots,e_t\}$, and
 $\{v_i,v_{i+1}\}\subseteq e_i$ for $1\leq i\leq t$. (Here $v_{t+1} := v_1$ and $t\geq2$.)
\item  There exists a cyclic ordering of the vertices of $\mathcal G$ such that every edge of $\mathcal G$ consists of  consecutive vertices and such that every pair of consecutive edges (in the natural ordering of the edges) intersect.
\item  There exists an integer $\ell$ and  a cyclic ordering of the vertices of $\mathcal G$ such that every edge of $\mathcal G$ consists of  consecutive vertices and such that every pair of consecutive edges (in the natural ordering of the edges) intersect in precisely $\ell$ vertices.  
\item  $\mathcal G$ is  2-regular and connected. 
\item  $\mathcal G$ is $h$-regular and connected.
\end{enumerate}
A   common property of $\mathcal G$ in all these definitions is the connectivity. For the purpose of this paper, $\mathcal G$ will be called a  {\it cycle} (Berge cycle),  {\it circle}, {\it $\ell$-intersecting circle}, {\it connected 2-factor}, and {\it connected $h$-factor}, respectively, if it satisfies condition (I), (II), (III), (IV), and (V), respectively. Note that every circle is also a cycle, but the converse is not  true. 

By the {\it length} of a cycle we mean the number of edges of the cycle, and a {\it $t$-cycle} is a cycle of length $t$. 
In an $n$-vertex hypergraph, a cycle is {\it Hamiltonian} if it is of length $n$, and {\it spanning} if it spans the whole hypergraph. These two notions coincide when $\cc G$ is a graph. Observe that every Hamiltonian cycle is spanning, but the converse is not true.

For example, $\cc G_1=(\{1,2,3,4,5,6\},\{ 132, 264, 435, 561\})$ is a 2-regular non-Hamiltonian 4-cycle (and a connected 2-factor), but it is not a circle, and  $\cc G_2=(\{1,2,3,4,5,6\},\{312, 264, 435, 531\})$ is an irregular circle, but there is no $\ell$ for which $\cc G_2$ is $\ell$-intersecting. The cycles $\mathcal C$ with edge set $\{132, 253, 314, 415, 531\}$ and $\mathcal C'$ with edge set $\{243, 354, 421, 125, 542\}$ provide  a  (cyclic) decomposition of $K_5^3$ into irregular Hamiltonian cycles. Note that  an $h$-uniform $(h-1)$-intersecting circle is $h$-regular. In the last section, we will show that not every connected factor is a cycle. 

A very special case of Problem \ref{prob1cyc} where $\lambda=1$, and each  (not necessarily regular) cycle is Hamiltonian  was in fact studied by Bermond  in the 70s \cite{MR0505807}. In order to decompose $K_n^h$ into Hamiltonian cycles, it is clearly necessary that $n|\binom{n}{h}$. The results of Bermond \cite{MR0505807}, Verrall \cite{MR1297389}, K\"uhn and Osthus \cite{MR3213309} collectively imply that this condition is  sufficient as long as $n$ is not too small.  

Working with circles seems to be much more challenging. When $\lambda=1$ and each circle is  Hamiltonian and $(h-1)$-intersecting,  Bailey and Stevens  \cite{MR2684077} solved  Problem \ref{prob1cyc} for the cases  when  (i) $h=3, n\leq 16$, and (ii) $(n,h)=(9,4)$. Later,  Meszka and Rosa \cite{MR2554542} solved the cases when (i) $\lambda=1,h=3, n\leq 32$,  (ii) $\lambda=1, (n,h)=(13,4)$, (iii) $\lambda=h=3, n$ even. Using design theoretic ideas, they also solved the problem of decomposing $K_n^3$ into $2$-intersecting 4-cycles. 
 
In a different direction the following related problem is solved  by the first author \cite{MR3213845}: $\lambda K_n^h$ can be decomposed into connected spanning regular subgraphs $G_1, \ldots, G_k$ such that each $G_i$ has $c_i$ edges if and only if (i) $\sum_{i=1}^k c_i=\lambda \binom{n}{h}$, (ii) $c_ih/n\geq 2$ and (iii) $c_ih/n$ is an integer, for $1\leq i\leq k$.

In order to decompose  $\mathcal G=(V,E)$ with $\crk(\mathcal{G})=h$ into cycles of specified lengths $c_1, \dots, c_k$ with $2\leq c_1\leq  \dots\leq  c_k\leq |V|$, an obvious necessary condition is that $\sum_{i=1}^k c_i=|E|$. We show that this condition is sufficient in the following cases. 
\begin{enumerate}
\item [(R1)] $h\geq \max\{c_k, \lceil n/2 \rceil+1\}$.
\item [(R2)] $\mathcal G=\lambda K_n^h$, $h\geq \lceil n/2 \rceil+1$.
\item [(R3)] $\mathcal G=K_n^h$, $c_1= \dots=c_k:=c$, $c|n(n-1), n\geq 85$.
\end{enumerate}
In (R2), we  guarantee that each cycle is almost regular. In (R3), we also solve the case where a  subset $L$ of edges of $K_n^h$ with $|L|<c$ is removed.

We shall prove the cases (R1) and (R2) in Section \ref{hicr}, and the case (R3) in Section \ref{fixedlen}. The results of Section \ref{hicr} rely on finding long cycles in bipartite graphs. To ensure that the cycles in a decomposition are (almost) regular, an extension of Baranyai's Theorem is used. The main result of Section \ref{fixedlen} heavily relies on that of K\"uhn and Osthus \cite{MR3213309} which employs the Kruskal-Katona theorem. 

We end this section with some notation. For a subset $S$ of vertices of a graph $G$, let $N(S)$ be the set of vertices having a neighbor in $S$, and  $[n]:=\{1,2,\dots,n\}$. A bipartite graph $G$ with bipartition $\{X, Y \}$ will be denoted by $G[X, Y ]$.

\section{Hypergraphs of High Corank} \label{hicr}
In this section, we focus on $n$-vertex hypergraphs of corank at least $\lceil n/2 \rceil+1$. 

To prove our first result, we need the following theorem.
\begin{theorem}  \textup{(Jackson, \cite[Theorem 1]{MR624549})} \label{Jack78bip}
Let $h\geq 2$, and $G[X,Y]$ be a simple bipartite graph  with $2\leq |X|\leq h\leq  |Y|\leq 2h-2$, such that the degree of each vertex in $X$ is at least $h$. Then $G$ contains a cycle of length $2|X|$. 
\end{theorem}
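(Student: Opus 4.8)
The plan is to exploit that a cycle of length $2|X|$ in a bipartite graph alternates between the two sides, so it uses exactly $|X|$ vertices from each of $X$ and $Y$; in particular it must pass through \emph{every} vertex of $X$. Writing $m:=|X|$ and $n:=|Y|$, such a cycle is precisely a cyclic ordering $x_1,x_2,\dots,x_m$ of $X$ together with distinct vertices $y_1,\dots,y_m\in Y$ satisfying $y_i\in N(x_i)\cap N(x_{i+1})$ for each $i$ (indices modulo $m$), where $y_i$ is the ``connector'' joining two consecutive members of $X$. So the whole problem reduces to producing such an ordering together with a system of distinct connectors.

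The one structural fact driving everything is a lower bound on common neighbourhoods. Since every $x\in X$ has $\deg(x)\ge h$ and $|Y|=n\le 2h-2$, inclusion--exclusion gives, for any two $x,x'\in X$,
\[
|N(x)\cap N(x')|\ \ge\ \deg(x)+\deg(x')-n\ \ge\ 2h-n\ \ge\ 2 .
\]
Thus \emph{any} two vertices of $X$ have at least two common neighbours in $Y$; in particular any two $X$-vertices can in principle be made consecutive on the cycle, and (as a sanity check) Hall's condition for matching $X$ into $Y$ holds trivially since $\deg(x)\ge h\ge m$. Note also that $m\ge 2$ already yields a $4$-cycle covering two $X$-vertices, so a cycle exists to start from.

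First I would run an extremal/longest-cycle argument. Take a cycle $C$ of $G$ covering the maximum possible number $s$ of vertices of $X$; such a $C$ has length $2s$ and is described by an ordering $x_1,\dots,x_s$ with connectors $y_1,\dots,y_s$ as above. If $s=m$ we are done, so suppose $s<m$ and fix $x_0\in X\setminus V(C)$. To insert $x_0$ into the gap between $x_i$ and $x_{i+1}$ we must delete the connector $y_i$ and replace it by two distinct connectors $a\in N(x_i)\cap N(x_0)$ and $b\in N(x_0)\cap N(x_{i+1})$ avoiding the $s-1$ connectors still in use (one of $a,b$ may reuse $y_i$). Whenever some common neighbour of $x_0$ and a cycle-neighbour lies off $C$, such a replacement increases $s$, contradicting maximality. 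Because every relevant pair has at least $2h-n$ common neighbours while $C$ occupies only $s\le m-1$ vertices of $Y$, this direct insertion already succeeds as long as $s<2h-n$.

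The hard part --- and the step where the exact hypothesis $n\le 2h-2$ is essential --- is the regime $s\ge 2h-n$, in which the (few) common neighbours of a consecutive pair may all have been trapped on $C$, blocking every direct insertion. Here I would resort to a P\'osa-type rotation/exchange: reroute a short segment of $C$ through an alternative connector (available precisely because each pair has a \emph{second} common neighbour) so as to release a vertex of $Y$ back into $Y\setminus V(C)$, and then insert $x_0$ using the freed vertex. The bound $n\le 2h-2$ is exactly what makes this always possible: the host side $Y$ is simply too small for the common neighbourhoods of all consecutive pairs to be simultaneously saturated, so after finitely many rotations a free connector for $x_0$ must appear. Carrying out this rotation bookkeeping carefully --- showing that a blocking configuration cannot persist at every gap at once --- is the main obstacle; once it is handled, the maximality of $s$ is contradicted, forcing $s=m$ and producing the desired cycle of length $2|X|$.
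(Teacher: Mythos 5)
First, a point of order: the paper does not actually prove this statement --- it is imported as a black box from Jackson \cite[Theorem 1]{MR624549} and used directly in the proof of Theorem \ref{largeh} --- so there is no in-paper argument to compare yours against; your attempt has to stand on its own as a proof of Jackson's theorem, which is a nontrivial result with a several-page proof in the original paper.

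Judged that way, it has a genuine gap. Your setup is sound: a cycle of length $2|X|$ alternates sides and so traverses all of $X$ via a system of distinct connectors in $Y$, and the hypothesis $|Y|\le 2h-2$ gives $|N(x)\cap N(x')|\ge \deg(x)+\deg(x')-|Y|\ge 2h-|Y|\ge 2$ for every pair $x,x'\in X$, which is indeed the key structural consequence of the size bound on $Y$. Your insertion argument in the regime $s<2h-|Y|$ is essentially fine (modulo a small patchable issue: when $2h-|Y|-s=1$, the unique off-cycle common neighbour of $x_0$ with $x_i$ and with $x_{i+1}$ may be the same vertex, and $y_i$ need not be adjacent to $x_0$, so a single gap can fail and you must argue over all gaps). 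But the entire content of the theorem lives in the case you explicitly defer: you assert that ``after finitely many rotations a free connector for $x_0$ must appear'' and concede that showing a blocking configuration cannot persist ``is the main obstacle.'' The counting slogan that $Y$ is ``too small to be simultaneously saturated'' is not an argument: the connectors occupy only $s\le h-1$ vertices of $Y$ while $|Y|\ge h$, so free $Y$-vertices always exist; the difficulty is that none of them need be a common neighbour of any relevant pair, and no pigeonhole over gaps forces otherwise. That this difficulty is real is shown by the sharpness of the hypothesis: with $|Y|=2h-1$, take $Y=Y_1\cup Y_2$ with $|Y_1|=|Y_2|=h$ and $|Y_1\cap Y_2|=1$, and join each vertex of $X_1$ to all of $Y_1$ and each vertex of $X_2$ to all of $Y_2$; every $X$-degree equals $h$, yet the shared vertex is a cut vertex, so no cycle meets both $X_1$ and $X_2$ (already $|X_1|=|X_2|=1$ kills the $4$-cycle, since that pair has only one common neighbour). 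A correct rotation argument must therefore use the two-common-neighbour property globally to exclude exactly such cut configurations, and that is the substance of Jackson's analysis --- not a finiteness or bookkeeping remark. As written, your proposal is a plausible plan whose decisive case is unproven.
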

Now, we are ready to prove the following result.
\begin{theorem}\label{largeh}
Let $\cc G=(V,E)$ be a hypergraph, and $2\leq c_1\leq \ldots\leq c_k\leq|V|$ such that $\crk(\cc G)\geq \max\{c_k, \lceil \frac{|V|}{2} \rceil+1\}$. Then  $\cc G$  can be decomposed into $k$  cycles of specified  lengths $c_1,\dots,c_k$ if and only if   $\sum_{i=1}^k c_i=|E|$.  
\end{theorem}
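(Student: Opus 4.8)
The plan is to recast the decomposition problem in the bipartite incidence graph of $\cc G$ and then to peel off the required cycles one at a time, each time invoking Jackson's theorem (Theorem~\ref{Jack78bip}).

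First I would set $n:=|V|$ and introduce the bipartite incidence graph $B[E,V]$, whose parts are $E$ and $V$, with $e\in E$ joined to $v\in V$ precisely when $v\in e$. This graph is simple, and the degree of each $e\in E$ is $|e|\ge\crk(\cc G)$. The point of this reformulation is an exact correspondence: a $t$-cycle $v_1,e_1,\dots,v_t,e_t$ of $\cc G$ is the same thing as a cycle $v_1e_1v_2e_2\cdots v_te_tv_1$ of length $2t$ in $B[E,V]$, because $\{v_i,v_{i+1}\}\subseteq e_i$ says exactly that $e_i$ is adjacent to both $v_i$ and $v_{i+1}$, and the distinctness requirements on the $v_i$ and the $e_i$ are precisely what make the closed walk a genuine cycle. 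Since $\sum_i c_i=|E|$, decomposing $\cc G$ into cycles of lengths $c_1,\dots,c_k$ is the same as partitioning $E$ into classes of sizes $c_1,\dots,c_k$, each of which is the $E$-side of a cycle of $B$ of twice that length.

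The heart of the argument would be a single-cycle extraction step: given any hypergraph $\cc H$ on $V$ with $\crk(\cc H)\ge\max\{c,\lceil n/2\rceil+1\}$ and at least $c$ edges, and given any $c$ of those edges, I claim there is a $c$-cycle of $\cc H$ on exactly those edges. To prove this I would take $X$ to be the chosen $c$ edges, set $Y:=V$, and apply Theorem~\ref{Jack78bip} to $B[X,V]$ with the parameter $h:=\max\{c,\lceil n/2\rceil+1\}$. All four hypotheses can then be checked at once: $2\le|X|=c\le h$ is immediate; $h\le|Y|=n$ follows from $c\le n$ and $\lceil n/2\rceil+1\le n$; $|Y|=n\le 2h-2$ follows from $2h-2\ge 2\lceil n/2\rceil\ge n$; and every $e\in X$ has degree $|e|\ge\crk(\cc H)\ge h$ in $B[X,V]$. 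Jackson's theorem then produces a cycle of length $2|X|=2c$ in $B[X,V]$, and since such a cycle meets each part in $c$ vertices it covers all of $X$, giving a $c$-cycle of $\cc H$ on exactly the prescribed edges.

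With the extraction step available, I would finish by a greedy peeling: process the lengths in decreasing order $c_k,c_{k-1},\dots,c_1$, at each stage extracting a $c_i$-cycle on any $c_i$ of the still-unused edges and then deleting them. This works because deleting edges never lowers the corank, so the remaining hypergraph still satisfies $\crk\ge\max\{c_k,\lceil n/2\rceil+1\}\ge\max\{c_i,\lceil n/2\rceil+1\}$ (here $c_i\le c_k$ is used), and because after removing the cycles of lengths $c_k,\dots,c_{i+1}$ exactly $\sum_{j\le i}c_j\ge c_i$ edges remain, leaving enough to choose $X$ and reaching $0$ at the end. Necessity of $\sum_i c_i=|E|$ is trivial. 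The only place I expect to need genuine care is the simultaneous verification of Jackson's inequalities: for small $c$ the binding constraint is the upper bound $|Y|\le 2h-2$, while the degree hypothesis pushes the other way, and the temptation is to shrink $Y$. The clean way around this is to keep $Y=V$ throughout and let the single choice $h=\max\{c,\lceil n/2\rceil+1\}$ carry everything, so that the corank bound $\crk\ge\lceil n/2\rceil+1$ does double duty, simultaneously ensuring $n\le 2h-2$ and furnishing the degree $h$ at every edge.
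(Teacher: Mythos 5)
Your proposal is correct and takes essentially the same route as the paper: both reformulate via the bipartite incidence graph and apply Jackson's theorem (Theorem~\ref{Jack78bip}) to an arbitrary class of $c_i$ edges, so that any partition of $E$ into parts of sizes $c_1,\dots,c_k$ yields the cycles. Your variations --- greedy peeling in decreasing length order instead of partitioning up front, taking $Y=V$ rather than $Y=V(F)$, and using $h=\max\{c,\lceil n/2\rceil+1\}$ rather than $h=\crk(\cc G)$ --- are immaterial, since the extraction step works on any prescribed edge set independently of the rest of the hypergraph.
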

\begin{proof} The necessity is obvious. To prove the sufficiency, note that since  $\sum_{i=1}^k c_i=|E|$, we can decompose $\mathcal G$ into subgraphs $F_1,\dots,F_k$ with $|E(F_i)|=c_i$ for $i\in[k]$.  Let $h=\crk(\mathcal G)$, and  fix an element $F\in \{F_1,\dots,F_k\}$. Define the bipartite simple graph $B[X,Y]$ with $X:=E(F),Y:=V(F)$ in which two vertices $x,y$ with $x\in X,y\in Y$ are adjacent, if and only if $y\in x$ in $F$. Since  $2\leq c_i\leq h$ for $i\in[k]$, we have that $2\leq |X|\leq h$. Moreover, since $\crk(\mathcal G)=h$, the degree of each vertex in $X$ is at least $h$, and since $h\geq \lceil \frac{|V|}{2} \rceil+1$, we have $h\leq  |Y|\leq 2h-2$. Therefore, by Theorem \ref{Jack78bip} $B$ contains a cycle $C$ of length $2|X|$.
Let us assume that $C$ is the sequence of distinct vertices  $v_1,e_1,\dots,v_{|X|},e_{|X|},v_1$ with $v_i\in Y,e_i\in X$ for $1\leq i\leq|X|$. 
Clearly, $v_1,e_1,\dots,v_{|X|},e_{|X|}$ corresponds to a cycle of length $|X|$ in $\mathcal G$, and therefore $F$ is a cycle of length $|X|$ in $\mathcal G$.  
Therefore, $\cc G$ can be decomposed into $F_1,\dots,F_k$ where $F_i$ is a cycle of length $c_i$ for $i\in[k]$. 
\end{proof}
To prove our next result,  we need a few preliminaries.  

Let $G[X,Y]$ be a simple bipartite graph with $|X|=|Y|\geq 2$. By  a  classical result of Moon and Moser \cite[Corollary 3]{MR0161332}, if $d(x)+d(y)\geq |X|+1$ for every pair of non-adjacent vertices $x\in X,y\in Y$, then $G$ is Hamiltonian. For the purpose of this paper, we need the following   generalization.
\begin{theorem} \textup{(Chiba et al., \cite[Theorem 5]{MR2913078})}  \label{moonmosergen}
Let $G[X,Y]$  be a  bipartite graph with $ |X|\geq  |Y|\geq 2$. If $d(x)+d(y)\geq \frac{1}{2}(|X|+|Y|)+1$ for every pair of non-adjacent vertices $x\in X,y\in Y$, then $G$ contains a cycle of length $2|Y|$.
\end{theorem}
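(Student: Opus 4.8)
The plan is to prove this by the Bondy--Chv\'atal closure method, adapted to the unbalanced bipartite setting. Write $p:=|X|\ge q:=|Y|\ge 2$ and set $\sigma:=\tfrac12(p+q)+1$. The property I wish to track is ``$G$ contains a cycle of length $2q$'' (equivalently, a cycle meeting every vertex of $Y$). The first step is to verify that this property is closure-stable: if $x\in X$ and $y\in Y$ are non-adjacent with $d_G(x)+d_G(y)\ge\sigma$, then $G$ has a $2q$-cycle if and only if $G+xy$ does. Granting this, note that inserting an edge only raises degrees, so the hypothesis $d(x)+d(y)\ge\sigma$ for \emph{every} non-adjacent pair persists as edges are added; hence I may keep adding missing edges until the closure is the complete bipartite graph $K_{p,q}$. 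Since $p\ge q$, $K_{p,q}$ plainly contains a cycle of length $2q$ (alternate all $q$ vertices of $Y$ with any $q$ distinct vertices of $X$), and unwinding the closure then produces such a cycle in $G$ itself.

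The only nontrivial direction of closure-stability is the key lemma: assuming $G+xy$ has a $2q$-cycle while $G$ does not, I must derive $d_G(x)+d_G(y)<\sigma$. Any $2q$-cycle of $G+xy$ must traverse the new edge $xy$, so deleting that edge leaves a path $P=z_1z_2\cdots z_{2q}$ in $G$ with $z_1=x$, $z_{2q}=y$, alternating between the two sides and meeting every vertex of $Y$. I then run the standard Ore crossing argument: if for some $i$ one has $x\sim z_{2i}$ and $y\sim z_{2i-1}$, then rerouting $P$ along these two chords yields the $2q$-cycle $z_1,z_{2i},z_{2i+1},\dots,z_{2q},z_{2i-1},z_{2i-2},\dots,z_2,z_1$ of $G$, which avoids $xy$ and contradicts our assumption. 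Hence each $Y$-neighbour $z_{2i}$ of $x$ on $P$ forbids the $X$-vertex $z_{2i-1}$ from being a neighbour of $y$; counting these forbidden positions bounds the number of \emph{on-path} neighbours of $y$ by $q-d(x)$, so the on-path contributions of $x$ and $y$ together total at most $q$.

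The main obstacle is precisely the feature that makes this stronger than the classical Moon--Moser theorem: because $p>q$, the path $P$ omits $p-q$ vertices of $X$, and $y$ may be adjacent to many of these \emph{off-path} vertices, which the crossing count does not restrict. A crude estimate gives only $d(x)+d(y)\le p$, which fails to beat $\sigma$ once $p$ is appreciably larger than $q$. To close this gap I plan to exploit the surplus vertices of $X$ via P\'osa-type rotation and extension: for each off-path neighbour $x'$ of $y$ I extend $P$ to the longer path $z_1\cdots z_{2q}x'$ and rotate, aiming to show that such an $x'$ likewise obstructs certain chords at $x$ (else a $2q$-cycle reappears); a careful accounting should cap the off-path neighbours of $y$ at $\tfrac12(p-q)$, improving the total bound to $d(x)+d(y)\le\tfrac12(p+q)<\sigma$ and delivering the contradiction. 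Organising this rotation bookkeeping uniformly over all off-path neighbours, in the unbalanced regime, is where I expect the real difficulty to lie.
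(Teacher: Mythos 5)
Your strategy collapses at its very first step: the pairwise closure-stability lemma you propose is simply \emph{false} at the threshold $\sigma=\tfrac12(|X|+|Y|)+1$. Here is a counterexample. Take $q=3$, $p=5$, so $\sigma=5$. Let $Y=\{y,y_1,y_2\}$, $X=\{x,a,b,c,d\}$, and let $G$ have the edges $xy_1,\,xy_2,\,ay_1,\,ay_2,\,by_2,\,by,\,cy,\,dy$. Then $x$ and $y$ are non-adjacent with $d(x)+d(y)=2+3=5\geq\sigma$, and $G+xy$ contains the $6$-cycle $x\,y_1\,a\,y_2\,b\,y\,x$, which covers all of $Y$. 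But $G$ contains no $6$-cycle: any $6$-cycle must use all three vertices of $Y$, in particular $y_1$, whose only neighbors are $x$ and $a$, so the cycle contains the path $x\,y_1\,a$; from $a$ the only continuation is $y_2$; from $y_2$ one either returns to $x$ (closing a $4$-cycle that misses $y$) or proceeds to $b$, then forcedly to $y$ --- and $y\not\sim x$, so the cycle cannot close. The same example kills your hoped-for rotation bound: relative to the path $P=x\,y_1\,a\,y_2\,b\,y$, the vertex $y$ has \emph{two} off-path neighbors $c,d$, exceeding your target cap of $\tfrac12(p-q)=1$, and since $c$ and $d$ have degree $1$ there is nothing to rotate or extend through, so no P\'osa-type bookkeeping can recover the count. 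What your crossing argument actually proves is $d(x)+d(y)\leq p$ in the bad configuration, i.e., stability at threshold $p+1$; when $p>q$ this strictly exceeds $\sigma$, which is precisely why the unbalanced theorem cannot be localized Bondy--Chv\'atal style at its own threshold.

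Nor can you rescue the scheme by invoking the \emph{global} hypothesis at each edge-addition: the statement ``if every non-adjacent pair of $G$ satisfies the degree-sum bound and $xy\notin E(G)$, then $G$ has a $2q$-cycle iff $G+xy$ does'' is essentially equivalent to the theorem itself (the forward direction already asserts the conclusion for $G$), so it cannot serve as the induction step of a closure argument. The paper does not prove this theorem at all --- it imports it verbatim from Chiba, Fujisawa, Tsugaki and Yamashita, whose argument is a direct global analysis of a longest cycle/path in the unbalanced bipartite graph rather than any closure procedure --- so the honest assessment is that your proposal is not a repair away from a proof: its central lemma is refuted, and the part you flagged as ``where I expect the real difficulty to lie'' is exactly where the approach provably cannot go through.
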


Recall that a hypergraph is said to be almost regular if the degree of all its vertices are within one of another. A complete solution to the problem of factorization of $K_n^h$ was  obtained   in the 1970s by Baranyai \cite{MR0416986}. 
For the purpose of this paper the following $\lambda$-fold version of Baranyai's Theorem is needed. 
\begin{theorem}   \textup{(Bahmanian, \cite[Theorem 6.4]{MR2942724})}  \label{lambdabaranyai}
$\lambda K_n^h$ can be decomposed into spanning subgraphs $F_1,\dots,F_k$ with $|E(F_i)|=c_i$ so that each $F_i$ is almost regular if and only if  $\sum_{i=1}^k c_i=\lambda \binom{n}{h}$. 
\end{theorem}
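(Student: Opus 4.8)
The necessity is immediate, since $\lambda K_n^h$ has exactly $\lambda\binom{n}{h}$ edges. For the sufficiency, the engine of the whole argument is the following rounding principle: if $M$ is a real matrix all of whose row sums and column sums are integers, then $M$ can be rounded entrywise---each entry to its floor or its ceiling---to an integer matrix having exactly the same row and column sums. This is the standard consequence of the total unimodularity of the incidence matrix of a bipartite graph (equivalently, of the integral max-flow theorem): the admissible roundings are the integral points of a transportation polytope, and that polytope is nonempty because the fractional parts can be routed as a circulation. I would first record that the almost-regular degree targets are globally consistent. Form the $n\times k$ matrix $M$ with $M_{v,i}=hc_i/n$; its $i$-th column sum is $n\cdot hc_i/n=hc_i\in\mathbb Z$, and its $v$-th row sum is $(h/n)\sum_{i}c_i=(h/n)\lambda\binom{n}{h}=\lambda\binom{n-1}{h-1}\in\mathbb Z$. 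Thus every vertex wants total degree $\lambda\binom{n-1}{h-1}$ (its degree in $\lambda K_n^h$), every class $F_i$ wants total degree $hc_i$ (its handshake sum), and these are simultaneously achievable with each individual degree forced into the window $\{\lfloor hc_i/n\rfloor,\lceil hc_i/n\rceil\}$ demanded by almost-regularity.

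The real content is to \emph{realize} such degree data by an honest partition of the $h$-edges, which I would establish by induction on $n$. Passing from $[n-1]$ to $[n]$, the $h$-subsets split into those avoiding $n$ (the $h$-subsets of $[n-1]$) and those containing $n$ (in bijection with the $(h-1)$-subsets of $[n-1]$). The plan is to strengthen the inductive hypothesis so that it simultaneously provides coordinated partitions of $\binom{[n-1]}{h}$ and $\binom{[n-1]}{h-1}$---and, to close the recursion, of $\binom{[m]}{j}$ across the relevant uniformities $j$---with all partial sizes and degrees pinned to prescribed windows. Given such partitions, one sets $F_i$ to consist of the $h$-subsets already assigned to class $i$ together with the sets $S\cup\{n\}$ for $S$ in class $i$ of the $(h-1)$-level partition; the degree of $n$ in $F_i$ is then the size of that $(h-1)$-class, while the degree of a vertex $v\in[n-1]$ is its degree in the $h$-level class plus its degree in the $(h-1)$-level class.

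The coordination between the two levels is exactly a rounding/flow step: one must choose how to split each target $c_i$ between the $h$-level and the $(h-1)$-level and how to distribute the degrees, so that the summed vertex degrees land in the window $\{\lfloor hc_i/n\rfloor,\lceil hc_i/n\rceil\}$. Writing the ideal fractional split as a matrix whose margins are the already-fixed sizes and degree totals (hence integral) and applying the rounding principle produces the required integral assignment. The $\lambda$-fold multiplicity enters only as a scaling of the integer capacities: each edge of $K_n^h$ appears $\lambda$ times, so every count above is multiplied by $\lambda$ and the same integral-flow machinery applies verbatim.

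The main obstacle is the design and maintenance of this strengthened invariant: one must set up the transportation network at each step so that a single integral rounding simultaneously (i) keeps each class on track toward its final size $c_i$, (ii) holds every partial degree inside its floor/ceiling window, and (iii) stays feasible, so that the polytope is nonempty before integrality is invoked and the induction never stalls. Verifying this nonemptiness at every stage---that the prescribed margins can be met fractionally within the degree windows---is where the genuine combinatorial care lies; once that is in place, almost-regularity of the output $F_1,\dots,F_k$ is automatic, since every vertex degree was confined to the two-value window throughout.
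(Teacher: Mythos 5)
You should first note that the paper contains no proof of this statement at all: it is imported verbatim from Bahmanian \cite{MR2942724}, where it is obtained by an amalgamation--detachment argument (one starts from a single amalgamated vertex carrying all edges as loops, pre-partitioned into the classes $F_1,\dots,F_k$, and detaches $n$ vertices one at a time while an edge-coloring/balancing lemma keeps all partial degrees within one of each other). Your route is instead Baranyai's original integral-flow/matrix-rounding induction, which is a legitimate alternative and does extend to $\lambda$-fold multiplicities; your necessity argument and your consistency computation (the $n\times k$ matrix with entries $hc_i/n$, row sums $\lambda\binom{n-1}{h-1}$, column sums $hc_i$) are correct but, as you yourself concede, carry no realizability content.

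The genuine gap is that you never formulate the strengthened inductive statement and explicitly defer its feasibility (``where the genuine combinatorial care lies'') --- and that deferred step \emph{is} the theorem; worse, the invariant your sketch implicitly suggests is provably insufficient. The correct object at stage $m$ is the multiset of traces $A\cap[m]$, each $j$-subset of $[m]$ occurring with multiplicity $\lambda\binom{n-m}{h-j}$, partitioned into classes $A_i^{(m)}$ of size exactly $c_i$. If the invariant records only that every $\deg_{A_i^{(m)}}(v)$ lies in $\{d_i,d_i+1\}$ with $d_i=\lfloor c_ih/n\rfloor$, it admits dead ends: suppose within one class $i$ every $v\le m$ sits at the ceiling $d_i+1$ (permitted by the window, compensated in other classes, since only $\sum_i\deg_{A_i}(v)=\lambda\binom{n-1}{h-1}$ is pinned). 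Writing $c_ih/n=d_i+\epsilon_i$, the handshake identity forces the remaining degree budget of class $i$ to be $c_ih-m(d_i+1)$, so at $m=n-1$ the degree of the last vertex in class $i$ is forced to equal $d_i+n\epsilon_i-(n-1)$, which is far below $d_i$ whenever $\epsilon_i<(n-1)/n$; the transportation polytope at that step is empty. So one must enrich the invariant --- e.g., additionally pin each partial sum $\sum_{v\le m}\deg_{A_i^{(m)}}(v)$ (equivalently the class deficiency $\sum_{T\in A_i^{(m)}}(h-|T|)$) to $\lfloor mc_ih/n\rfloor$ or $\lceil mc_ih/n\rceil$ --- and then exhibit a fractional point of the enriched transportation problem at every step before invoking integrality. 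Your phrase ``partial sizes and degrees pinned to prescribed windows'' gestures at this but never specifies which quantities are pinned or verifies maintainability, so your items (i)--(iii) remain a wish-list rather than a proof: as the example shows, they can conflict under the invariant as described, and resolving that conflict is exactly where Baranyai's (and, in different clothing, Bahmanian's) proof lives.
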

Now, we are ready to prove our next result.
\begin{theorem} \label{thm1hchyp}
Let $c_1,\dots,c_k,n,h,\lambda \in \mathbb{N}$  with $2\leq c_1\leq \dots\leq c_k\leq n$, and $h\geq \frac{n}{2}+\frac{n(1+\epsilon_i)}{n+c_i}$ for $i\in[k]$ where $\epsilon_i=c_ih/n-\lfloor c_ih/n\rfloor$. Then $\lambda K_n^h$ can be decomposed into $k$ almost regular  spanning cycles of specified  lengths $c_1,\dots,c_k$ if and only if  $\sum_{i=1}^k c_i=\lambda \binom{n}{h}$. Moreover, for each $i$ with $\epsilon_i=0$, the corresponding   cycle of length $c_i$ is regular.
\end{theorem}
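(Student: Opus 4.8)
The plan is to combine the $\lambda$-fold Baranyai theorem (Theorem~\ref{lambdabaranyai}) with the Moon--Moser type bound of Chiba et al.\ (Theorem~\ref{moonmosergen}), following the incidence-graph strategy already used for Theorem~\ref{largeh}. Necessity is immediate, so I would assume $\sum_{i=1}^k c_i = \lambda\binom{n}{h}$ and argue sufficiency. First I would apply Theorem~\ref{lambdabaranyai} to split $\lambda K_n^h$ into spanning subgraphs $F_1,\dots,F_k$ with $|E(F_i)| = c_i$, each almost regular; this is possible precisely because the edge count matches. Using Baranyai here (rather than an arbitrary partition into $c_i$-edge pieces, as in Theorem~\ref{largeh}) is what lets me control degrees: the almost regularity of $F_i$ will be inherited by the final cycle, and when $\epsilon_i=0$ it will sharpen to exact regularity.

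Next, for a fixed $F=F_i$ I would build the simple bipartite incidence graph $B[X,Y]$ with $X:=V$ and $Y:=E(F)$ (edges counted with multiplicity, so $|Y|=c_i$), joining $x\in X$ to $y\in Y$ iff $x\in y$. Since $2\le c_i\le n$, the sides satisfy $|X|=n\ge|Y|=c_i\ge 2$, exactly the configuration Theorem~\ref{moonmosergen} demands. A cycle of length $2|Y|=2c_i$ in $B$ must use every vertex of $Y$, hence every edge of $F$, and reads off directly as a Berge cycle $v_1,e_1,\dots,v_{c_i},e_{c_i}$ with $\{v_j,v_{j+1}\}\subseteq e_j$, whose edge set is all of $E(F)$; in other words, such a cycle \emph{is} $F$. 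Because $h>n/2$ and $c_i\ge 2$ force $\lfloor hc_i/n\rfloor\ge 1$, every vertex lies in some edge of $F$, so this cycle is spanning.

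The crux is checking the degree hypothesis of Theorem~\ref{moonmosergen}, namely $d_B(x)+d_B(y)\ge\tfrac12(n+c_i)+1$ for non-adjacent $x\in X$, $y\in Y$. Here $d_B(y)=|y|=h$ for every edge $y$, while $d_B(x)$ is the degree of vertex $x$ in $F$; from almost regularity and $\sum_x d_B(x)=hc_i$ I get $d_B(x)\ge\lfloor hc_i/n\rfloor=hc_i/n-\epsilon_i$. Hence
\begin{equation*}
d_B(x)+d_B(y)\ \ge\ h\cdot\frac{n+c_i}{n}-\epsilon_i,
\end{equation*}
and a one-line rearrangement shows the right-hand side is $\ge\tfrac12(n+c_i)+1$ exactly when $h\ge\tfrac{n}{2}+\tfrac{n(1+\epsilon_i)}{n+c_i}$. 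This is the standing hypothesis, so the condition holds for \emph{all} pairs, a fortiori for the non-adjacent ones. I expect this calibration to be the only genuine computation in the argument and the single place where the precise threshold on $h$ is consumed; everything else is bookkeeping.

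Finally, applying Theorem~\ref{moonmosergen} to each $B_i$ exhibits $F_i$ as a spanning Berge cycle of length $c_i$, and since the $F_i$ decompose $\lambda K_n^h$, so do these cycles. Each is almost regular because $F_i$ is, and if $\epsilon_i=0$ then $hc_i/n$ is an integer, forcing $F_i$ (and hence its cycle) to be regular, which yields the closing claim. Beyond the degree estimate, the only points needing care are the role-swap that puts the larger side $V$ into the ``$X$'' slot of Theorem~\ref{moonmosergen}, and the observation that a $2c_i$-cycle, though it names only $c_i$ of the $n$ vertices as the $v_j$'s, is nevertheless spanning because it carries all edges of the spanning subgraph $F$.
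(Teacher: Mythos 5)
Your proposal is correct and follows essentially the same route as the paper's own proof: Theorem~\ref{lambdabaranyai} to obtain almost regular spanning subgraphs $F_i$, the bipartite incidence graph with vertex side $V$ and edge side $E(F_i)$, and Theorem~\ref{moonmosergen} via the identical degree-sum calibration $h+\left\lfloor c_ih/n\right\rfloor\geq\frac{1}{2}(n+c_i)+1$. Your added details (multiplicity keeping $B$ simple, and why the resulting Berge cycle is spanning) are consistent with, and slightly more careful than, the paper's write-up.
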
 

\begin{proof} 
In order to decompose $\lambda K_n^h$ into $k$ (almost regular spanning) cycles of lengths $c_1,\dots, c_k$, it is clearly necessary that $\sum_{i=1}^k c_i=\lambda \binom{n}{h}$. 
To prove the sufficiency, note that since $\sum_{i=1}^k c_i=\lambda \binom{n}{h}$, by Theorem \ref{lambdabaranyai} $\lambda K_n^h$ can be decomposed into spanning subgraphs $F_1,\dots,F_k$ with $|E(F_i)|=c_i$ so that each $F_i$ is almost regular. Fix an $i$, $i\in[k]$, and define the bipartite simple graph $B_i[X_i,Y_i]$ with $X_i:=V(F_i),Y_i:=E(F_i)$ in which two vertices $x,y$ with $x\in X_i,y\in Y_i$ are adjacent, if and only if  $x\in y$ in $F_i$. Since $F_i$ is $h$-uniform and almost regular,  in the bipartite graph $B_i$, the degree of each vertex in $Y_i$ is $h$, and the degree of each vertex in $X_i$ is either $\lfloor c_ih/n \rfloor$ or $\lceil c_ih/n\rceil$. This implies that the minimum degree of any two non-adjacent vertices from  different parts of $B_i$ is at least $h+\lfloor c_ih/n \rfloor$. Now, we have 
\begin{eqnarray*}
h \geq \frac{n}{2}+\frac{n(1+\epsilon_i)}{n+c_i}=\frac{n(n+c_i+2+2\epsilon_i)}{2(n+c_i)}=\frac{n+c_i+2+2\epsilon_i}{2(1+c_i/n)} &\iff & \\
h(1+\frac{c_i}{n})\geq \frac{c_i+n}{2}+1+\epsilon_i & \iff & \\
h+\frac{c_i h}{n}-\epsilon_i \geq \frac{c_i+n}{2}+1& \iff & \\
h+\left\lfloor \frac{c_i h}{n} \right\rfloor\geq \frac{c_i+n}{2}+1.
\end{eqnarray*}
Therefore, by Theorem \ref{moonmosergen},  $B_i$ contains a cycle of length $2c_i$. 
Let us assume that $C$ is the sequence of distinct vertices  $v_1,e_1,\dots,v_{c_i},e_{c_i},v_1$ with $v_j\in X_i,e_j\in Y_i$ for $1\leq j\leq c_i$. 
Clearly, $v_1,e_1,\dots,v_{c_i},e_{c_i}$ corresponds to a spanning cycle of length $c_i$ in $\mathcal G$, and therefore $F_i$ is an almost regular spanning cycle of length $c_i$ in $\mathcal G$.  
Therefore, $\cc G$ can be decomposed into $F_1,\dots,F_k$ where $F_i$ is an almost regular spanning cycle of length $c_i$ for $i\in[k]$. 

If for some $i$, $\epsilon_i=0$, then $F_i$ is $\frac{c_ih}{n}$-regular, and this completes the proof. 
\end{proof}
Note that for $i\in[k]$, $2\leq c_i\leq n$ and $0\leq \epsilon_i\leq \frac{n-1}{n}$, therefore we have $1/2\leq \frac{n(1+\epsilon_i)}{n+c_i}<2$.
\begin{corollary} \label{thm1hchypcor}
Let $c_1,\dots,c_k,n,h,\lambda \in \mathbb{N}$  with $2\leq c_1\leq \dots\leq c_k\leq n$, and $h\geq  n/2 +2$. Then $\lambda K_n^h$ can be decomposed into $k$ almost regular  spanning cycles of specified  lengths $c_1,\dots,c_k$ if and only if   $\sum_{i=1}^k c_i=\lambda \binom{n}{h}$. 
\end{corollary}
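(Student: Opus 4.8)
The plan is to obtain Corollary \ref{thm1hchypcor} as an immediate consequence of Theorem \ref{thm1hchyp}, by observing that the clean, $c_i$-independent hypothesis $h\geq n/2+2$ is a strengthening of the more delicate threshold $h\geq \frac{n}{2}+\frac{n(1+\epsilon_i)}{n+c_i}$ required there. The necessity of $\sum_{i=1}^k c_i=\lambda\binom{n}{h}$ is inherited verbatim, since it is merely the edge count of a decomposition; so the entire task reduces to verifying that the numerical hypothesis of Theorem \ref{thm1hchyp} is satisfied for every $i\in[k]$.

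Concretely, I would argue that $h\geq n/2+2$ forces $h\geq \frac{n}{2}+\frac{n(1+\epsilon_i)}{n+c_i}$ for each $i$, which amounts to showing that $\frac{n(1+\epsilon_i)}{n+c_i}$ is bounded above by $2$ uniformly in $i$. This is exactly the strict inequality $\frac{n(1+\epsilon_i)}{n+c_i}<2$ recorded in the remark immediately preceding the corollary. For completeness I would spell out the one-line estimate it rests on: from $2\leq c_i\leq n$ and $0\leq \epsilon_i\leq \frac{n-1}{n}$ one gets $1+\epsilon_i\leq \frac{2n-1}{n}$ and $n+c_i\geq n+2$, whence
\[
\frac{n(1+\epsilon_i)}{n+c_i}\leq \frac{2n-1}{n+2}<2.
\]

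With this bound in hand, the chain $\frac{n}{2}+\frac{n(1+\epsilon_i)}{n+c_i}<\frac{n}{2}+2\leq h$ holds for every $i\in[k]$, so the hypotheses of Theorem \ref{thm1hchyp} are met. Applying that theorem then yields a decomposition of $\lambda K_n^h$ into $k$ almost regular spanning cycles of the prescribed lengths $c_1,\dots,c_k$, completing the sufficiency direction.

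I do not anticipate any genuine obstacle: all the substance lives in Theorem \ref{thm1hchyp}, and this corollary simply repackages its somewhat opaque threshold as the memorable bound $n/2+2$. The only point requiring care is confirming that the bound $\frac{n(1+\epsilon_i)}{n+c_i}<2$ is \emph{strict} and holds simultaneously across all admissible values of $c_i$ and $\epsilon_i$, which the stated ranges guarantee.
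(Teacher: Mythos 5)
Your proposal is correct and matches the paper exactly: the paper derives this corollary from Theorem \ref{thm1hchyp} via the remark immediately preceding it, which records precisely your bound $\frac{1}{2}\leq\frac{n(1+\epsilon_i)}{n+c_i}<2$ (from $2\leq c_i\leq n$ and $0\leq\epsilon_i\leq\frac{n-1}{n}$), so that $h\geq n/2+2$ implies the threshold $h\geq\frac{n}{2}+\frac{n(1+\epsilon_i)}{n+c_i}$ for every $i$. Your explicit verification $\frac{n(1+\epsilon_i)}{n+c_i}\leq\frac{2n-1}{n+2}<2$ is a correct spelling-out of the estimate the paper leaves implicit.
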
 
\begin{corollary} \label{thm1hchypcor2}
If $h\geq  (n+1)/2$, then $\lambda K_n^h$ can be decomposed into  regular  Hamiltonian cycles  if and only if   $n | \lambda \binom{n}{h}$. 
\end{corollary}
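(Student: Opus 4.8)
The plan is to obtain this statement as the special case of Theorem \ref{thm1hchyp} in which every prescribed cycle length equals $n$. Indeed, a Hamiltonian cycle is by definition a cycle of length $n$, so decomposing $\lambda K_n^h$ into Hamiltonian cycles means decomposing it into cycles of lengths $c_1 = \dots = c_k = n$.

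First I would dispatch necessity: if $\lambda K_n^h$ decomposes into $k$ Hamiltonian cycles, then each uses exactly $n$ edges, so $kn = \lambda\binom{n}{h}$ and hence $n \mid \lambda\binom{n}{h}$.

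For sufficiency, I would assume $n \mid \lambda\binom{n}{h}$, set $k := \lambda\binom{n}{h}/n$, and take $c_1 = \dots = c_k = n$, so that $\sum_{i=1}^k c_i = \lambda\binom{n}{h}$. The only thing to verify is that the hypothesis of Theorem \ref{thm1hchyp} is met. Since $h \in \mathbb{N}$ and $c_i = n$, we have $c_i h/n = h \in \mathbb{Z}$, so $\epsilon_i = h - \lfloor h\rfloor = 0$ for all $i$; the threshold then simplifies to
\[
\frac{n}{2} + \frac{n(1+\epsilon_i)}{n+c_i} = \frac{n}{2} + \frac{n}{2n} = \frac{n+1}{2}.
\]
Thus the standing assumption $h \geq (n+1)/2$ is precisely the bound required, and Theorem \ref{thm1hchyp} produces a decomposition of $\lambda K_n^h$ into $k$ almost regular spanning cycles of length $n$, i.e. into $k$ Hamiltonian cycles.

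Finally, regularity is delivered for free by the ``moreover'' clause of Theorem \ref{thm1hchyp}: because $\epsilon_i = 0$, each length-$n$ cycle is $\frac{c_i h}{n} = h$-regular, so the cycles produced are in fact regular Hamiltonian cycles, as claimed. I do not anticipate any genuine obstacle here; all the substance sits in Theorem \ref{thm1hchyp}, and this corollary is merely the clean endpoint one reaches by observing that at $c_i = n$ the fractional part $\epsilon_i$ vanishes and the cumbersome threshold collapses exactly to $(n+1)/2$.
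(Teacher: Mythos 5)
Your proof is correct and is exactly the paper's intended derivation: the corollary is stated without proof because it is the specialization $c_1=\dots=c_k=n$ of Theorem \ref{thm1hchyp}, where $\epsilon_i=0$ and the threshold $\frac{n}{2}+\frac{n(1+\epsilon_i)}{n+c_i}$ collapses to $\frac{n+1}{2}$, just as you compute. Your appeal to the ``moreover'' clause to upgrade almost regular to $h$-regular, and the edge-count argument for necessity, are likewise the steps the paper implicitly relies on, so nothing is missing.
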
 

\begin{example}\textup{
 Since $\binom{100}{51}$ is divisible by 100, $K_{100}^{51}$ can be decomposed into regular   Hamiltonian cycles.
}\end{example}
\begin{example}\textup{
For any $h\in\{52,53,\dots, 99\}$, $K_{100}^{h}$ can be decomposed into almost regular spanning cycles of length $c_1,\dots,c_k$  if and only if $\sum_{i=1}^k c_i= \binom{100}{h}$.
}\end{example}
\begin{example}\textup{
For any non-negative integer solution $(a,b,c,d,e)$ of the equation  $5a+10b+15c+75d+100e=\binom{100}{60}$, there exists a decomposition of $K_{100}^{60}$  into  $a$ regular cycles of length 5,  $b$ regular cycles of length 10,  $c$ regular cycles of length 15,  $d$ regular cycles of length 75, and $e$   regular Hamiltonian cycles.
}\end{example}

\begin{remark} \label{singleedgeleaf} \textup{
In Theorem \ref{thm1hchyp} and Corollary \ref{thm1hchypcor}, we can replace the condition  $c_1\geq 2$  by $c_1\geq 1$, however with this new condition, those subgraphs (in the decomposition of $\lambda K_n^h$) with only one edge will not be cycles anymore. 
}\end{remark}

\section{Fixed Cycle Lengths} \label{fixedlen}
In this section, we prove the following result.
\begin{theorem}\label{ccycledecomposition}
 Let $n,h,c\in \mathbb{N}$ with $2\leq c\leq n, 3\leq h< n$, and let $\ell=\binom{n}{h}-c\lfloor \binom{n}{h}/c \rfloor$. 
 Then for any  $L\subseteq E(K_n^h)$ with $|L|=\ell$,   $K_n^h\backslash L$ can be decomposed into  cycles of length $c$ in the following cases:

\textup{(A)} $h=3$, $c|n(n-1)$, $n\geq85$, and $L$ is a matching.

\textup{(B)} $4\leq h< \max\left\{c,\left\lceil\frac{n}{2}\right\rceil+1\right\}$, $c|n(n-1)$, and $n\geq23$.

\textup{(C)} $h\geq \max\left\{c,\left\lceil\frac{n}{2}\right\rceil+1\right\}$.

\textup{(D)} $h=n-2$, $c\in\{n-1,n\}$, and $n\geq16$.

\textup{(E)} $h=n-1$ and $c=n$.   
\end{theorem}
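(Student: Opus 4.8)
The choice of $\ell$ guarantees that $|E(K_n^h\setminus L)|=\binom{n}{h}-\ell=c\lfloor\binom{n}{h}/c\rfloor$ is a multiple of $c$, so in each case the goal is to partition these remaining edges into $N:=\lfloor\binom{n}{h}/c\rfloor$ Berge cycles of length $c$. The plan is to dispatch the cases in increasing order of difficulty. Case (C) requires no new work: deleting the edges of $L$ changes no surviving edge's size, so $\crk(K_n^h\setminus L)=h$, while the vertex set still has size $n$ and the common length is $c$. The hypothesis $h\ge\max\{c,\lceil n/2\rceil+1\}$ is then exactly the corank condition of Theorem \ref{largeh} applied to $\cc G=K_n^h\setminus L$ with $c_1=\dots=c_N=c$, and since $\sum_i c_i=|E(K_n^h\setminus L)|$, that theorem produces the decomposition directly.

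For (E) and (D) I would exploit the complement encoding $e\mapsto[n]\setminus e$, under which the edges of $K_n^{n-1}$ (resp.\ $K_n^{n-2}$) are identified with the vertices (resp.\ the pairs) of $[n]$. In (E) one has $\binom{n}{n-1}=n$ and $\ell=0$, so the task reduces to exhibiting one Hamiltonian Berge cycle using all $n$ edges: placing the vertices as $1,\dots,n$ around a circle and assigning to the link $\{i,i+1\}$ the edge $[n]\setminus\{i+2\}$ (indices modulo $n$) gives an alternating sequence of distinct vertices and distinct edges with $\{i,i+1\}\subseteq[n]\setminus\{i+2\}$, settling (E) outright. In (D) the same encoding turns a Berge $c$-cycle into a cyclically ordered list $v_1,\dots,v_c$ of distinct vertices together with $c$ distinct complement-pairs $\{a_i,b_i\}$ satisfying $\{a_i,b_i\}\cap\{v_i,v_{i+1}\}=\varnothing$; thus decomposing $K_n^{n-2}\setminus L$ into $c$-cycles amounts to partitioning the relevant pairs of $[n]$ into blocks of size $c$ that can each be ``threaded'' by such a vertex sequence. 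For $c\in\{n-1,n\}$ this is a short-range constraint which I would meet by an explicit rotational construction on $\mathbb Z_n$, treating $n$ even and $n$ odd separately since this parity controls $\ell$ (namely $\ell=n/2$ or $\ell=0$).

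Cases (A) and (B), where $c$ may exceed $h$ and $n$ is merely required to be large, form the core, and this is where the machinery of Kühn and Osthus \cite{MR3213309} (and, in the appropriate ranges of $h$, of Bermond \cite{MR0505807} and Verrall \cite{MR1297389}) enters through the Kruskal--Katona theorem. The plan is a two-stage construction. First, after deleting $L$, I would correct the edge count to a multiple of $n$ and invoke the collective Hamilton--Berge-cycle decomposition results to break the bulk of $K_n^h$ into Hamiltonian Berge cycles of length $n$. Second, I would regroup and re-cut these length-$n$ cycles into Berge cycles of length $c$: the hypothesis $c\mid n(n-1)$ is precisely what makes the number of links in a suitable bundle of Hamiltonian cycles divisible by $c$, so that the links (each an unordered pair sitting inside an $h$-edge) can be re-assembled into $c$-cycles with pairwise distinct underlying edges. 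The removed set $L$ is handled last, by absorbing its missing edges into a bounded number of purpose-built $c$-cycles before the generic decomposition is applied to the remainder; this is the reason (A) must assume $L$ is a matching (the only controllable orbit of $\ell$-sets under $S_n$ when $h=3$), whereas for $h\ge4$ the additional room should let an arbitrary $L$ be absorbed.

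The hard part will be the second stage of (A)/(B): converting a Hamilton-cycle decomposition into one of the prescribed length $c$ while simultaneously keeping the $c$ edges of each new cycle distinct, covering every surviving edge exactly once, and respecting an arbitrary (or matching) removed set $L$. The divisibility $c\mid n(n-1)$ is the necessary bookkeeping but is not sufficient on its own; the real work is a careful link-reassignment that I expect to need the full strength of the Kruskal--Katona-based estimates, in order to guarantee that enough edges of each shadow type remain available at every step, and these estimates are what should force the numerical thresholds $n\ge85$ for $h=3$, $n\ge23$ for $h\ge4$, and $n\ge16$ in (D). I would anticipate the $h=3$ case to be the most delicate, both because the Hamilton-decomposition input is weakest there and because an edge of size $h=3$ offers the fewest choices of interior link vertices, which is also precisely why $L$ must be restricted to a matching.
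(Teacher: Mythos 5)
Your case (C) matches the paper (a direct application of Theorem \ref{largeh}), and your explicit threading for (E) is fine, but the heart of the theorem --- cases (A) and (B) --- has a genuine gap, and (D) is left as an unexecuted sketch. Your two-stage plan for (A)/(B) (first decompose the bulk of $K_n^h$ into Hamilton Berge cycles via K\"uhn--Osthus, then ``re-cut'' them into $c$-cycles) does not work as described: cutting a Berge cycle of length $n$ into segments of length $c$ produces Berge \emph{paths}, and closing each segment into a cycle requires a fresh link inside an $h$-edge that has already been consumed; no local surgery closes these paths while keeping every edge used exactly once, and you concede yourself that $c\mid n(n-1)$ ``is not sufficient on its own'' without supplying the missing mechanism. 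The paper never passes through a Hamilton decomposition at all. Instead it builds a target multigraph $H=H_1+H_2$, where $H_1$ is a disjoint union of $\beta$ $c$-cycles taken from a $c$-cycle decomposition of $2K_n$ (this, via Theorem \ref{dknalspach} of Alspach et al., is the actual role of the hypothesis $c\mid n(n-1)$) and $H_2$ is a disjoint union of $2\alpha$ copies of $K_n$, so that $|E(H)|=|E(K_n^h\setminus L)|$ and $H$ decomposes into $c$-cycles for free. By Lemma \ref{bipartitegperfectm}, everything then reduces to finding a perfect matching in the bipartite graph between the $h$-edges of $K_n^h\setminus L$ and the $2$-edges of $H$, and the Kruskal--Katona shadow bounds (Lemma \ref{KruKatCor}) are used only to verify Hall's condition --- this is where the thresholds $n\ge85$ and $n\ge23$ arise, not in any link-reassignment. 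The matching hypothesis on $L$ in (A) likewise enters solely through Hall's condition: pairwise disjoint triples guarantee each pair lies in at most one deleted edge, giving $|L\cap N(S_1')|\le|S_1'|$, rather than through any ``absorption'' of $L$ into purpose-built cycles.

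For (D), your complement encoding is a reasonable start, but ``an explicit rotational construction on $\mathbb{Z}_n$'' is asserted, not given, and your parity bookkeeping is slightly off: for $c=n-1$ and $n$ odd the integrality of $\beta$ forces $|L|=(n-1)/2$, not $n/2$. The paper handles (D) inside the same matching framework, observing $\alpha=0$, pinning down $|L|\in\{0,(n-1)/2\}$ (for $c=n-1$) or $|L|\in\{0,n/2\}$ (for $c=n$), and checking Hall's condition by the crude degree bounds $|N(x)|\ge\frac{2}{3}|Y|$ and $|N(y)|\ge\frac{2}{3}|X|$, which is exactly what forces $n\ge16$. As it stands, your proposal establishes (C) and (E) but not (A), (B), or (D).
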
 

The proof heavily relies on that of \cite{MR3213309}. Using the following lemma, we transform the problem of decomposing $K_n^h$ into cycles, into a problem of decomposing a graph into cycles. 
\begin{lemma}\label{bipartitegperfectm}
Given an $h$-uniform hypergraph $\cc G$ and a graph $H$ with $V:=V(\cc G)=V(H)$, let $B[X,Y]$ be a bipartite graph with $X:=E(\cc G)$, $Y:=E(H)$, such that for $x\in X$, and $y\in Y$, $xy$ is an edge in $B$ if $y\subseteq x$. If $B$ has a perfect matching, and $H$ can be decomposed into cycles of lengths $c_1,\dots,c_k$, then $\cc G$ can be decomposed into cycles of lengths $c_1,\dots,c_k$.
\end{lemma}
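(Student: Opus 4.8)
The plan is to use the perfect matching to transport each cycle of $H$ to a Berge cycle of $\mathcal G$ of the same length, and to check that the resulting cycles partition $E(\mathcal G)$. Since $B[X,Y]$ is bipartite and admits a perfect matching $M$, necessarily $|X|=|Y|$ and $M$ is a bijection between $X=E(\mathcal G)$ and $Y=E(H)$. For each graph edge $y\in Y$ write $x_y:=M^{-1}(y)\in X$ for the hyperedge matched to it; by the defining adjacency rule of $B$ (namely $xy\in E(B)$ iff $y\subseteq x$) we have $y\subseteq x_y$. This containment is the crucial property that will let us substitute hyperedges for graph edges.

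Next I would fix a single cycle of length $c$ in the given decomposition of $H$, say with distinct vertices $v_1,\dots,v_c$ and edges $y_j=\{v_j,v_{j+1}\}$ for $1\le j\le c$ (with $v_{c+1}:=v_1$). I replace each $y_j$ by the matched hyperedge $x_{y_j}$ and form the alternating sequence $v_1,x_{y_1},v_2,\dots,v_c,x_{y_c}$. I would then verify the three requirements of definition~(I): the vertices $v_1,\dots,v_c$ are distinct because they come from a graph cycle; the hyperedges $x_{y_1},\dots,x_{y_c}$ are distinct because the $y_j$ are distinct and $M^{-1}$ is injective; and $\{v_j,v_{j+1}\}=y_j\subseteq x_{y_j}$ holds by the defining property of $B$ noted above. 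Hence this alternating sequence is a cycle of length $c$ in $\mathcal G$.

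Finally I would apply this construction to every cycle in the decomposition of $H$. Because $H$ is edge-decomposed into these cycles, their edge sets partition $Y=E(H)$; applying the bijection $M^{-1}$, the corresponding collections of hyperedges partition $X=E(\mathcal G)$. Consequently the Berge cycles produced above are pairwise edge-disjoint and together cover all of $E(\mathcal G)$, so they constitute a decomposition of $\mathcal G$ into cycles of lengths $c_1,\dots,c_k$, as required.

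I do not expect a genuine obstacle here; the lemma is essentially a bookkeeping argument built on the matching bijection. The only points requiring care are confirming that each matched hyperedge inherits the containment $\{v_j,v_{j+1}\}\subseteq x_{y_j}$ directly from the adjacency rule of $B$, and observing that edge-disjointness across distinct cycles is automatically preserved because $M$ is a bijection rather than merely an injection of one side.
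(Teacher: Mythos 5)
Your proposal is correct and follows essentially the same route as the paper's proof: use the perfect matching as a bijection between $E(\mathcal G)$ and $E(H)$, replace each edge of a cycle in $H$ by its matched hyperedge to obtain a Berge cycle of the same length, and note that bijectivity of the matching makes the resulting cycles partition $E(\mathcal G)$. Your write-up is if anything slightly more careful than the paper's, since you explicitly verify the distinctness of the hyperedges within each cycle and the edge-disjointness across cycles.
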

\begin{proof}
Note that the vertices in $X$, and $Y$, correspond to $h$-subsets, and 2-subsets of $V$, respectively. Let $M$ be a perfect matching in $B$. 
Let $c\in\{c_1,c_2,\dots,c_k\}$ and $C$ be a $c$-cycle in the graph $H$. We can write $C$ as the sequence $v_1,f_1,v_2,f_2,\dots,v_c,f_c,v_1$, where for $i\in[c]$, $v_i\in V$ and $f_i\in Y$. 
For each $i\in[c]$, let $e_i$ be the vertex in $X$ such that $e_if_i\in M$.
By the definition of $B$, the sequence $v_1,e_1,v_2,e_2,\dots,v_c,e_c,v_1$ forms a $c$-cycle in the hypergraph $\cc G$. 
Thus, each edge in the given decomposition of $H$ corresponds to a cycle of the same length in $\cc G$. Since $M$ is a perfect matching and the decomposition of $H$ into cycles covers all edges of $H$ exactly once, we obtain the desired decomposition of $G$ into cycles of lengths $c_1,\dots,c_k$.
\end{proof}
We need the following result, which is an immediate consequence of a more general theorem on decompositions of complete digraphs into directed cycles.

\begin{theorem}  \textup{(Alspach et al. \cite[Theorem 1.1]{MR1986837})} \label{dknalspach}
For $c,n\in \mathbb{N}$ with $2\leq c\leq n$ and $(n,c)\notin\{(4,4),(6,3),(6,6)\}$, the graph $2K_n$ can be decomposed into cycles of length $c$ if and only if $c|n(n-1)$.
\end{theorem}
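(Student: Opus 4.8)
The necessity is immediate: $2K_n$ has exactly $n(n-1)$ edges and every $c$-cycle consumes exactly $c$ of them, so a decomposition forces $c\mid n(n-1)$. For the sufficiency my plan is to lift the problem to the complete symmetric digraph $D_n^{*}$ on vertex set $[n]$, whose arc set contains both ordered pairs $(u,v)$ and $(v,u)$ for each $2$-subset $\{u,v\}$. Forgetting orientations is a bijection from the arcs of $D_n^{*}$ onto the edges of $2K_n$ (the two opposite arcs of a pair become its two parallel edges), and it carries every directed $c$-cycle to an undirected $c$-cycle on the same $c$ edges. Consequently a decomposition of $D_n^{*}$ into directed $c$-cycles projects onto a decomposition of $2K_n$ into $c$-cycles, and it suffices to prove the directed statement: $D_n^{*}$ decomposes into directed $c$-cycles exactly when $c\mid n(n-1)$, the triples $(n,c)\in\{(4,4),(6,3),(6,6)\}$ being the configurations in which divisibility holds but no directed decomposition exists. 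This reduction is what makes the theorem ``an immediate consequence'' of the directed result, and it explains why the three exceptional triples are inherited from the directed setting rather than arising from $2K_n$ itself.

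To build the directed decompositions I would work on the vertex set $\mathbb{Z}_n$ and organize everything around the difference method. The arcs of $D_n^{*}$ split into the $n-1$ difference classes $\{(i,i+s):i\in\mathbb{Z}_n\}$ for $s\in\{1,\dots,n-1\}$, each a $1$-regular digraph and hence a disjoint union of short directed cycles. A well-chosen ``starter'' directed $c$-cycle, whose $c$ consecutive differences sum to $0$ modulo $n$ and cover a prescribed list of difference classes, produces $n$ translates under the action of $\mathbb{Z}_n$ that exactly tile those classes; iterating this over disjoint blocks of difference classes yields the bulk of the cycles. The two ends of the length range anchor the argument: the shortest case $c=2$ is the trivial pairing of opposite arcs, while the Hamiltonian case $c=n$ is exactly Tillson's theorem, that $D_n^{*}$ decomposes into directed Hamiltonian cycles for every $n\neq 4,6$.

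For intermediate lengths I would combine two recursive devices. First, partition $[n]$ into blocks and write $D_n^{*}$ as the edge-disjoint union of the within-block complete digraphs and a between-block complete multipartite digraph; an auxiliary lemma decomposing complete bipartite and multipartite digraphs into directed $c$-cycles then reduces matters to smaller complete digraphs, setting up an induction on $n$. Second, when $c$ is composite I would assemble long cycles by concatenating shorter difference-method gadgets whose lengths combine to $c$. The heart of the difficulty, and the reason the theorem needs its exact case analysis, lies at the low end: constructing the base Hamiltonian decompositions (Tillson's construction is delicate and genuinely breaks at $n=4$ and $n=6$) and clearing the small sporadic instances, most notably the directed-triangle case $c=3$, where a decomposition is precisely a Mendelsohn triple system and fails exactly at $n=6$. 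Certifying that $(4,4),(6,3),(6,6)$ are the only obstructions while every other divisible pair is reachable by the difference-method-plus-recursion machinery is where essentially all of the work concentrates; once the directed result is in hand, the projection of the first paragraph returns the desired decomposition of $2K_n$ with no further effort.
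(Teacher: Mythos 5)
Your first paragraph---reducing the undirected statement to the Alspach--Gavlas--Sajna--Verrall theorem on decomposing the complete symmetric digraph $D_n^{*}$ into directed $c$-cycles, via the orientation-forgetting bijection between opposite arc pairs and the parallel edge pairs of $2K_n$---is exactly how the paper obtains this result, which it states as ``an immediate consequence of a more general theorem on decompositions of complete digraphs into directed cycles'' and does not prove further. Your remaining two paragraphs are a correct-in-outline but non-self-contained sketch of the cited directed theorem itself (difference methods, Tillson's Hamiltonian decompositions, Mendelsohn triple systems for $c=3$); since the paper treats that theorem as known, your reduction already matches the paper's proof, and no more is required.
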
 
Let $n,h$ be integers with $0\leq h\leq n$, and ${[n]\choose h}=\{s\subseteq[n]:|s|=h\}$. For $i\in[h]$ and for any $S$ with $S\subseteq{[n]\choose h}$, we define $\partial_i^-(S)$ and $\partial_i^+(S)$ as follows:
\begin{eqnarray*}
&&\partial_i^-(S)=\left\{t\in{[n]\choose h-i}:t\subseteq s \mbox{ for some } s\in S\right\},\\
&&\partial_i^+(S)=\left\{t\in{[n]\choose h+i}:s\subseteq t \mbox{ for some }  s\in S\right\}.
\end{eqnarray*}

For any positive real number $s$ and any integer $h$ with $1\leq h\leq s$, 
$${s\choose h}:=s(s-1)\dots(s-h+1)/h!.$$
We need the following two lemmas, which are consequences of the Kruskal-Katona theorem. 
\begin{lemma} \textup{(K\"uhn and Osthus \cite{MR3213309})}\label{KruKatCor}
Let $n,h\in\mathbb{N}$, $2\leq h\leq n$, and $\emptyset\neq T\subseteq{[n]\choose h}$.
\begin{enumerate}[]
\item \textup{(i)} 
If $h\geq 3$ and $t\in\mathbb{R}$ such that $\left|T\right|={t\choose h}$, then, $\left|\partial_{h-2}^-(T)\right|\geq{t\choose2}$.
\item \textup{(ii)} If $h=2$ and $t:=\left|T\right|\leq n-1$, then $$\left|\partial_2^+(T)\right|\geq t{n-t-1\choose2}+{t\choose2}(n-t-1).$$
\item \textup{(iii)} If $h=2$, $p,q\in\mathbb{N}\cup\{0\}$, such that $p<n$, $q<n-(p+1)$, and $\left|T\right|=pn-{p+1\choose2}+q$, then, 
\begin{eqnarray*}\left|\partial_1^+(T)\right|&\geq& {n-1\choose2}+{n-2\choose2}+\dots+{n-p\choose2}+q(n-p-2)-{q\choose2}\\
&\geq& p{n-p\choose2}+q(n-p-2)-{q\choose2}.
\end{eqnarray*}
\end{enumerate}
\end{lemma}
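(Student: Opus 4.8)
The plan is to obtain all three parts from the Kruskal--Katona theorem, applying it to lower shadows directly and reducing the upper-shadow estimates in (ii) and (iii) to lower shadows by complementation.

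First I would record the Lov\'asz form of Kruskal--Katona: if $\emptyset\neq S\subseteq\binom{[n]}{j}$ and $|S|\geq\binom{x}{j}$ for a real $x\geq j$, then $|\partial_1^-(S)|\geq\binom{x}{j-1}$; this bound is monotone in $|S|$, so it can be applied repeatedly. Since a set of size $h-i$ is contained in a member of $T$ exactly when it can be reached by removing elements one at a time, one has $\partial_i^-(T)=\partial_1^-(\partial_{i-1}^-(T))$. Hence part (i) follows by iteration: from $|T|=\binom{t}{h}$ we get $|\partial_1^-(T)|\geq\binom{t}{h-1}$, then $|\partial_2^-(T)|\geq\binom{t}{h-2}$, and after $h-2$ steps $|\partial_{h-2}^-(T)|\geq\binom{t}{2}$. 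The only point to monitor is that the hypothesis $x\geq j$ survives each step, which holds since the parameter $j$ runs from $h$ down to $2$ while $t\geq h$.

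For (ii) and (iii) I would first convert the upper shadow into a lower shadow by complementation. Writing $\bar e:=[n]\setminus e$ and $\bar T:=\{\bar e:e\in T\}\subseteq\binom{[n]}{n-2}$, the map $B\mapsto[n]\setminus B$ is a bijection from $\partial_i^+(T)$ onto $\partial_i^-(\bar T)$, so $|\partial_i^+(T)|=|\partial_i^-(\bar T)|$. By Kruskal--Katona the lower shadow of a family of $(n-2)$-sets of given size is minimised by an initial segment of colex order; since complementation reverses colex, the minimising family of $2$-sets $T$ is a final segment of colex, that is, the graph consisting of all edges meeting a fixed vertex set together with a partial star on the next vertex. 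For (ii), where $t\leq n-1$, this minimiser is exactly a star $K_{1,t}$; for (iii), where $|T|=pn-\binom{p+1}{2}+q$, it is the set of all edges incident to a fixed $p$-set (which has $pn-\binom{p+1}{2}$ edges) together with $q$ further edges at one more vertex, the condition $q<n-(p+1)$ ensuring these extra edges all lie in a single star.

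It remains to evaluate the shadows of these explicit graphs. For the star the number of $4$-sets meeting it is $\binom{n-1}{3}-\binom{n-1-t}{3}$, and splitting by the number of leaves in the $4$-set rewrites this as $t\binom{n-t-1}{2}+\binom{t}{2}(n-t-1)+\binom{t}{3}$; discarding the last term gives (ii). For (iii) the triples meeting the fixed $p$-set number $\binom{n}{3}-\binom{n-p}{3}=\binom{n-1}{2}+\cdots+\binom{n-p}{2}$ by the hockey-stick identity, while the $q$ extra edges contribute exactly $q(n-p-2)-\binom{q}{2}$ new triples once the $\binom{q}{2}$ doubly counted triples are removed; adding these yields the first bound, and the second follows from $\binom{n-i}{2}\geq\binom{n-p}{2}$ for $1\leq i\leq p$. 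I expect the main obstacle to be precisely this last bookkeeping for (iii): correctly pinning down the colex-extremal graph for the non-binomial cardinality $pn-\binom{p+1}{2}+q$ and counting the triples newly covered by the $q$ extra edges without double counting, which is exactly what produces the corrective term $-\binom{q}{2}$.
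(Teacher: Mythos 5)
Your proposal is correct, and it follows essentially the route the paper intends: the paper states this lemma without proof, citing K\"uhn and Osthus \cite{MR3213309}, where exactly these bounds are derived from the Kruskal--Katona theorem --- the iterated Lov\'asz form for the lower-shadow bound in (i), and complementation (turning upper shadows of $2$-sets into lower shadows of $(n-2)$-sets, with colex-extremal families becoming the star in (ii) and the ``all edges meeting a $p$-set plus a $q$-edge star'' graph in (iii)) for the upper-shadow bounds. Your counts for the extremal configurations, including the corrective term $-\binom{q}{2}$ from the doubly counted triples $\{w,u_i,u_j\}$, all check out.
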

\begin{remark}\textup{
Note that in Lemma \ref{KruKatCor} (iii), $|T|$ is positive since 
\begin{eqnarray*}
|T|&=&pn-\dfrac{(p+1)p}{2}+q\geq p\left(\frac{2n-p-1}{2}\right)\\
&>&p\left(\frac{2n-n-1}{2}\right)=p\left(\frac{n-1}{2}\right)\geq0.
\end{eqnarray*}
}
\end{remark}
In particular, when $n\geq85$ and $p\leq8$, the bound in Lemma \ref{KruKatCor} (iii) can be simplified.
\begin{lemma}\label{KruKatCor+}
If $p$ and $q$ are defined as in Lemma \ref{KruKatCor} (iii), $n\geq 85$, and $p\leq8$, then $\left|\partial_1^+(T)\right|\geq p{n-p\choose2}+\frac{2qn}{5}$.
\end{lemma}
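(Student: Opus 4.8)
The plan is to reduce the stated bound to an elementary estimate on the two
terms appearing in Lemma \ref{KruKatCor} (iii). Recall that part (iii) gives
$$
\left|\partial_1^+(T)\right|\geq q(n-p-2)-\binom{q}{2}
   +\binom{n-1}{2}+\binom{n-2}{2}+\dots+\binom{n-p}{2}.
$$
The weaker tail bound $p\binom{n-p}{2}$ is already isolated there, so it suffices
to show that under $n\geq 85$ and $p\leq 8$ the remaining contribution dominates
$\tfrac{2qn}{5}$; that is,
$$
q(n-p-2)-\binom{q}{2}\;\geq\;\frac{2qn}{5}.
$$
If $q=0$ the inequality is trivial, so assume $q\geq 1$ and divide through by
$q$. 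Since $\binom{q}{2}=q(q-1)/2$, it is enough to prove
$$
n-p-2-\frac{q-1}{2}\;\geq\;\frac{2n}{5},
$$
i.e. $\tfrac{3n}{5}\geq p+2+\tfrac{q-1}{2}$.

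First I would bound the right-hand side using the constraints on $p$ and $q$
supplied by Lemma \ref{KruKatCor} (iii): we have $p\leq 8$ and, crucially,
$q<n-(p+1)$, so $q-1<n-p-2\leq n-2$ and hence $\tfrac{q-1}{2}<\tfrac{n-2}{2}$.
Plugging these in gives
$$
p+2+\frac{q-1}{2}\;<\;8+2+\frac{n-2}{2}\;=\;9+\frac{n}{2}.
$$
It therefore remains to verify $\tfrac{3n}{5}\geq 9+\tfrac{n}{2}$, which is
equivalent to $\tfrac{n}{10}\geq 9$, i.e. $n\geq 90$. This is slightly weaker
than the claimed range, so the second step is to tighten the estimate on $q$:
rather than the crude $q<n-p-2$, I would keep the sharper $p\leq 8$ and use the
exact bound $q\le n-p-2$ more carefully, or retain one of the discarded tail
terms $\binom{n-1}{2},\dots,\binom{n-p}{2}$ in excess of $p\binom{n-p}{2}$ to
absorb the small deficit on the interval $85\leq n\leq 89$.

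The one genuinely delicate point is this boundary region $85\leq n\leq 89$,
where the purely linear estimate above is not quite enough. The clean way to
handle it is to exploit that the sum $\sum_{j=1}^{p}\binom{n-j}{2}$ strictly
exceeds $p\binom{n-p}{2}$ (each omitted term $\binom{n-j}{2}$ with $j<p$ is
larger than $\binom{n-p}{2}$), producing a positive surplus of order $n$ that
covers the shortfall $9+\tfrac{n}{2}-\tfrac{3n}{5}=9-\tfrac{n}{10}$, which is at
most $\tfrac{1}{2}$ when $n\geq 85$. I expect this surplus comparison to be the
main obstacle, since everything else is a one-line manipulation; once the
surplus is shown to dominate the $O(1)$ deficit, combining it with the linear
bound for $n\geq 90$ yields the inequality $\left|\partial_1^+(T)\right|\geq
p\binom{n-p}{2}+\tfrac{2qn}{5}$ throughout the stated range, completing the
proof.
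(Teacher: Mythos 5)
Your reduction is exactly the paper's: invoke Lemma \ref{KruKatCor} (iii), observe that it suffices to show $q(n-p-2)-\binom{q}{2}\geq\frac{2qn}{5}$, divide by $q$, and verify a linear inequality in $n,p,q$. But as written your argument establishes the lemma only for $n\geq 90$, and the interval $85\leq n\leq 89$ --- the only delicate part, as you yourself note --- is deferred to two sketched repairs, neither carried out. The half-unit you lose is pure integrality: since $q$ is an integer and $q<n-(p+1)\leq n-1$, you have $q\leq n-2$, hence $\frac{q-1}{2}\leq\frac{n-3}{2}$, not merely the strict continuous bound $\frac{q-1}{2}<\frac{n-2}{2}$ that you used. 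With this, $p+2+\frac{q-1}{2}\leq 8+2+\frac{n-3}{2}=\frac{n+17}{2}$, and $\frac{3n}{5}\geq\frac{n+17}{2}$ is equivalent to $n\geq 85$ --- exactly the claimed threshold, and exactly the paper's computation (there phrased as $10n-10p-20-5q+5\geq 4n$). Alternatively, your first proposed repair works and is even cleaner: keeping the coupled bound $q\leq n-p-2$ gives $p+2+\frac{q-1}{2}\leq\frac{n+p+1}{2}\leq\frac{n+9}{2}$, and $\frac{3n}{5}\geq\frac{n+9}{2}$ already holds for all $n\geq 45$; you state this option but never execute the one-line computation that closes the gap.

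Your second proposed repair (retaining the tail terms $\binom{n-1}{2},\dots,\binom{n-p}{2}$) is shakier as described. The surplus $\sum_{j=1}^{p-1}\bigl[\binom{n-j}{2}-\binom{n-p}{2}\bigr]$ is empty for $p\leq 1$, so the claim of ``a positive surplus of order $n$'' fails there (harmless, since for small $p$ there is no deficit, but this would need to be said); more importantly, the shortfall to be absorbed is not the per-unit quantity $9-\frac{n}{10}\leq\frac{1}{2}$ but that quantity multiplied by $q$, i.e.\ up to roughly $\frac{n}{2}$, since the inequality you divided by $q$ must be multiplied back before comparing with the surplus, which lives at the level of $\left|\partial_1^+(T)\right|$. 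The comparison can still be made to work --- under your continuous bounds a deficit arises only at $p=8$, where the surplus equals $28n-168$ --- but as stated the argument conflates per-$q$ and total deficits. In short: same route as the paper, correct for $n\geq 90$, with a genuine (if easily repaired) gap on $85\leq n\leq 89$; either integrality of $q$ (the paper's move) or your own coupled bound $q\leq n-p-2$ closes it, and the latter shows the lemma actually holds for all $n\geq 45$.
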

\begin{proof}
By Lemma \ref{KruKatCor} (iii), 
$$\left|\partial_1^+(T)\right|\geq p{n-p\choose2}+q(n-p-2)-{q\choose2}.$$
Note that $q\leq n-2$, since $q<n-(p+1)$ and $p\geq0$. Since $n\geq85$, $p\leq8$, and $q\leq n-2$, we have $10n-10p-20-5q+5\geq4n$, or equivalently, $n-p-2-\frac{q-1}{2}\geq\frac{2n}{5}$. Multiplying both sides by $q$, we have $q(n-p-2)-{q\choose2}\geq\frac{2qn}{5}$. This completes the proof.
\end{proof}
For the rest of this section, let $n$, $h$, $c$, and $L$ satisfy the conditions of Theorem \ref{ccycledecomposition}. The following parameters will be frequently used.
$$
\alpha:=\left\lfloor\frac{{n\choose h}-\left|L\right|}{n(n-1)}\right\rfloor,
\beta:=\frac{{n\choose h}-\left|L\right|-\alpha n(n-1)}{c}.
$$
By our assumption in Theorem \ref{ccycledecomposition}, $c|{n\choose h}-|L|$ and $c|n(n-1)$, and so $\beta\in\mathbb{N}\cup\{0\}$. Moreover, $\alpha>\frac{{n\choose h}-|L|}{n(n-1)}-1$ implies $\alpha n(n-1)>{n\choose h}-|L|-n(n-1)$, and so, $\beta c<n(n-1)$ or equivalently, $\beta\leq\frac{n(n-1)}{c}-1$. Therefore,
\begin{equation}\label{sc}
\beta c\leq n(n-1)-c.
\end{equation}
For graphs $G$ and $H$, let the union of $G$ and $H$, denoted by $G\cup H$, be the graph with vertex set $V(G)\cup V(H)$ and edge set $E(G)\cup E(H)$. Note that $G\cup H$ can be a multigraph. We also define the disjoint union of $G$ and $H$, denoted by $G+H$, to be the graph with vertex set $V(G)\cup V(H)$, where $V(G)$ and $V(H)$ are considered as disjoint sets, and edge set $E(G)\cup E(H)$. The disjoint union of graphs $G_1,G_2,\dots,G_k$ is denoted by $\sum_{i=1}^kG_i$.

Since $c|n(n-1)$, $2K_n$ can be decomposed into cycles of length $c$, by Theorem \ref{dknalspach}. We need the following definitions throughout this section:
\begin{itemize}
\item $H_1:=\sum_{i=1}^{\beta}C_i$, where $C_1,\dots,C_{\beta}$ are $\beta$ arbitrary cycles from a decomposition of $2K_n$ into $c$-cycles. Note that by (\ref{sc}), $H_1$ is well-defined. 
\item $H_2:=\sum_{i=1}^{\alpha}\left(B_{2i-1}\cup B_{2i}\right)$, where for $i\in[2\alpha]$, $B_i$ is a copy of $K_n$.
\item $\cc G:=K_n^h\backslash L$ and $H:=H_1+H_2$. Note that by the definition of $H_2$, $H$ is a multigraph, and that $\left|E(\cc G)\right|=\left|E(H)\right|$.
\item $B[X,Y]$ is the balanced bipartite graph with $X:=E(\cc G)$, $Y:=E(H)$, such that for $x\in X$, and $y\in Y$, $xy$ is an edge in $B$ if $y\subseteq x$.
\end{itemize}  
Since for $i\in[\beta]$, $B_{2i-1}\cup B_{2i}\cong2K_n$, by Theorem \ref{dknalspach}, $H_2$ and consequently $H$, can be decomposed into $c$-cycles. Therefore, by Lemma \ref{bipartitegperfectm}, in order to decompose $\cc G$ into $c$-cycles, it suffices to show that $B$ has a perfect matching. To do so, we will heavily rely on Lemmas \ref{KruKatCor} and \ref{KruKatCor+} to ensure that $B$ satisfies Hall's condition. To that end, let $S$ be an arbitrary nonempty subset of $X$. We will show that $|N(S)|\geq|S|$.

For the rest of this section, we will frequently use the following parameters. 
\begin{itemize}
\item $a:=\frac{|S|}{{n\choose h}}$. It is clear that $0\leq a\leq1$.
\item Let $s\in \mathbb{R}$, with $h\leq s\leq n$, such that $|S|={s\choose h}$. 
\item $b:=\frac{\left|N\left(S\right)\cap E(B_1)\right|}{{n\choose2}}$. It is clear that $0\leq b\leq1$.
\item $g:=\frac{{n\choose h}-\left|X\right|+n(n-1)-c}{{n\choose h}}$. Note that $1-g=\frac{|X|-n(n-1)+c}{{n\choose h}}$.
\end{itemize}
\begin{lemma}\label{preg}
$|N(S)|\geq a^{2/h}(|X|-n(n-1)+c)$.
\end{lemma}
\begin{proof}
Note that each element of $S$ is an $h$-subset of $[n]$ and $N(S)\cap E(B_1)=\partial_{h-2}^-(S)$. By Lemma \ref{KruKatCor} (i), $\left|N\left(S\right)\cap E(B_1)\right|\geq{s\choose2}$. Therefore, $b{n\choose2}\geq{s\choose2}$. Hence:
\begin{eqnarray*}
b^h&\geq&\frac{{s\choose2}^h}{{n\choose2}^h}=\frac{\left(s\left(s-1\right)\right)^h}{\left(n(n-1)\right)^h}=\left(\prod_{i=1}^h\frac{s}{n}\right)\left(\prod_{i=1}^h\frac{s-1}{n-1}\right)\\
&=&\left(\frac{s-1}{n-1}\prod_{i=1}^{h-1}\frac{s}{n}\right)\left(\frac{s}{n}\prod_{i=2}^h\frac{s-1}{n-1}\right)\\
&\geq&\left(\frac{s-h+1}{n-h+1}\prod_{i=1}^{h-1}\frac{s-i+1}{n-i+1}\right)\left(\frac{s}{n}\prod_{i=2}^h\frac{s-i+1}{n-i+1}\right)\\
&=&\left(\prod_{i=1}^h\frac{s-i+1}{n-i+1}\right)\left(\prod_{i=1}^h\frac{s-i+1}{n-i+1}\right)\\
&=&\left(\frac{s\left(s-1\right)\dots(s-h+1)}{n(n-1)\dots(n-h+1)}\right)^2=\frac{{s\choose h}^2}{{n\choose h}^2}=a^2.
\end{eqnarray*}
In the inequalities above, we have used the fact that $\frac{s}{n}\geq\frac{s-1}{n-1}\geq\frac{s-i+1}{n-i+1}$, for $2\leq i\leq h$. Therefore, $b\geq a^{2/h}$, and so, 
\begin{eqnarray*}
\left|N\left(S\right)\right|&\geq&2\alpha\left|N\left(S\right)\cap E(B_1)\right|\geq2\alpha a^{\frac{2}{h}}{n\choose2}\\
&=&a^{\frac{2}{h}}\alpha n(n-1)=a^{\frac{2}{h}}\left(|X|-\beta c\right)\\
&\geq&a^{\frac{2}{h}}\left(|X|-n(n-1)+c\right).
\end{eqnarray*}
The last inequality is a consequence of (\ref{sc}).
\end{proof}
\begin{lemma}\label{g}
If $a^{1-\frac{2}{h}}\leq 1-g$, then $\left|N\left(S\right)\right|\geq\left|S\right|$.
\end{lemma}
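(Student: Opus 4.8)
The plan is to combine Lemma~\ref{preg} with the definition of $g$ in an essentially algebraic way. Before anything else I would note that $S$ is assumed nonempty, so $|S|\geq 1$ and hence $a=|S|/\binom{n}{h}>0$; in particular $a^{2/h}>0$, which is what will let us rescale the hypothesis without worrying about a degenerate $a=0$ case.

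First I would invoke Lemma~\ref{preg}, which gives $|N(S)|\geq a^{2/h}\bigl(|X|-n(n-1)+c\bigr)$. Next, using the identity recorded immediately after the definition of $g$, namely $1-g=\frac{|X|-n(n-1)+c}{\binom{n}{h}}$, I would rewrite the parenthetical factor as $|X|-n(n-1)+c=(1-g)\binom{n}{h}$. Substituting this into the bound from Lemma~\ref{preg} yields $|N(S)|\geq a^{2/h}(1-g)\binom{n}{h}$.

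It then remains only to feed in the hypothesis $a^{1-2/h}\leq 1-g$. Multiplying both sides of this inequality by the positive quantity $a^{2/h}\binom{n}{h}$ gives $a\binom{n}{h}\leq (1-g)\,a^{2/h}\binom{n}{h}$. The left-hand side is exactly $|S|$ by the definition of $a$, while the right-hand side is at most $|N(S)|$ by the displayed bound of the previous paragraph. Chaining these two inequalities produces $|S|\leq |N(S)|$, which is the desired conclusion.

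I do not expect a genuine obstacle in this lemma: it is a bookkeeping step that repackages the Kruskal--Katona-based estimate of Lemma~\ref{preg} into a single clean sufficient condition, $a^{1-2/h}\leq 1-g$, under which Hall's condition $|N(S)|\geq|S|$ holds for the given set $S$. The only point that needs a word of care is the positivity of $a$, which follows from the nonemptiness of $S$ and is precisely what justifies multiplying through by $a^{2/h}$ without reversing the inequality or trivializing it. The substantive work---checking that this sufficient condition is actually met for every nonempty $S\subseteq X$, across the different ranges of $|S|$ (equivalently of $a$ or $s$)---is what the later arguments in the section must supply.
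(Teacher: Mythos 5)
Your proof is correct and follows essentially the same route as the paper's: both combine Lemma~\ref{preg} with the identity $1-g=\frac{|X|-n(n-1)+c}{\binom{n}{h}}$ and the hypothesis $a^{1-\frac{2}{h}}\leq 1-g$ to obtain $|N(S)|\geq a^{\frac{2}{h}}\left(|X|-n(n-1)+c\right)\geq a^{\frac{2}{h}}a^{1-\frac{2}{h}}\binom{n}{h}=a\binom{n}{h}=|S|$. Your added remark on the positivity of $a$ is harmless but not needed, since multiplying by a nonnegative factor already preserves the inequality in the required direction.
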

\begin{proof}
Since $a^{1-\frac{2}{h}}\leq1-g=\frac{\left|X\right|-n(n-1)+c}{{n\choose h}}$, by Lemma \ref{preg} we have:
\begin{eqnarray*}
\left|N\left(S\right)\right|&\geq& a^{\frac{2}{h}}(\left|X\right|-n(n-1)+c)\\
&\geq& a^{\frac{2}{h}} a^{1-\frac{2}{h}}{n\choose h}
=a{n\choose h}=\left|S\right|.
\end{eqnarray*}
\end{proof}
Let $S'=Y\backslash N(S)$.
\begin{lemma}\label{ns'}
If $|N(S')|\geq|S'|$, then $|N(S)|\geq|S|$.
\end{lemma}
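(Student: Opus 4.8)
The plan is to run a short complementary counting argument that hinges on two facts already available to us: the bipartite graph $B[X,Y]$ is \emph{balanced}, so $|X|=|Y|$, and $S'$ is defined as the exact complement $Y\setminus N(S)$. No deep input is needed here; the whole statement should fall out of keeping the two neighbourhoods straight and cancelling a common term.

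First I would unwind the definition of $S'$. By construction $N(S)$ is precisely the set of vertices of $Y$ having a neighbour in $S$, so its complement $S'=Y\setminus N(S)$ consists exactly of those vertices of $Y$ with \emph{no} neighbour in $S$. Consequently no vertex of $S'$ is adjacent to any vertex of $S$, which means the neighbourhood $N(S')$, a subset of $X$, misses $S$ entirely; that is, $N(S')\subseteq X\setminus S$. This gives the upper bound $|N(S')|\leq |X|-|S|$. Next I would invoke the hypothesis together with the balance of $B$: by assumption $|N(S')|\geq |S'|$, and since $S'=Y\setminus N(S)$ we have $|S'|=|Y|-|N(S)|=|X|-|N(S)|$, using $|X|=|Y|$. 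Chaining these bounds yields $|X|-|N(S)|\leq |N(S')|\leq |X|-|S|$, and cancelling $|X|$ leaves $|N(S)|\geq |S|$, which is exactly what we want.

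The only point requiring any care is the direction of the neighbourhood relation: $N(S')$ lives in $X$ while $N(S)$ lives in $Y$, so it is the characterization of $S'$ as the set of $Y$-vertices with no $S$-neighbour that forces $N(S')\cap S=\emptyset$ rather than the other way around. Once that inclusion is pinned down, the two size estimates and the balance condition $|X|=|Y|$ finish the proof immediately, so I do not anticipate any genuine obstacle.
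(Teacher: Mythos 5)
Your proof is correct and follows essentially the same route as the paper: the paper's one-line chain $|N(S)|=|Y|-|S'|\geq|Y|-|N(S')|=|X|-|N(S')|\geq|S|$ is exactly your argument, with your inclusion $N(S')\subseteq X\setminus S$ being the step the paper leaves implicit in its final inequality. Making that inclusion explicit is a fine touch, but there is no substantive difference.
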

\begin{proof}
Let $|N(S')|\geq|S'|$. Therefore,
$$|N(S)|=|Y|-|S'|\geq|Y|-|N(S')|=|X|-|N(S')|\geq|S|.$$ 
\end{proof}
Now, let $S'_1=S'\cap E(B_1)$.
\begin{lemma}\label{S'}
If $S'\neq\emptyset$, then
$$2\alpha\leq\left|S'\right|\leq\left(2\alpha+2\right)\left|S'_1\right|.$$
\end{lemma}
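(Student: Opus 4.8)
The plan is to reduce everything to the underlying $2$-subsets of $[n]$ and to exploit the fact that adjacency in $B$ depends only on the $2$-subset \emph{label} of a vertex of $Y$, not on which copy (in $H_1$ or in one of the copies of $K_n$ making up $H_2$) it comes from. Concretely, a vertex $y\in Y=E(H)$ lies in $N(S)$ if and only if its underlying $2$-subset is contained in some $h$-set of $S$, i.e.\ the $2$-subset lies in $\partial_{h-2}^-(S)$. Hence $N(S)$ contains either \emph{all} copies of a given $2$-subset or \emph{none} of them. Writing $T:=\binom{[n]}{2}\setminus\partial_{h-2}^-(S)$ for the set of $2$-subsets contained in no $h$-set of $S$, the set $S'=Y\setminus N(S)$ is exactly the collection of all copies in $H$ of the $2$-subsets belonging to $T$.

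Next I would pin down $|S_1'|$. Since $E(B_1)$ is a single copy of $K_n$, it contains exactly one copy of each $2$-subset of $[n]$; therefore $S_1'=S'\cap E(B_1)$ contains exactly one copy of each element of $T$, giving $|S_1'|=|T|$. With this identification, the whole statement becomes a counting of the multiplicity with which the $2$-subsets of $T$ occur in $H=H_1+H_2$.

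I would then count $|S'|$ by splitting the contributions of $H_2$ and $H_1$. In $H_2=\sum_{i=1}^{\alpha}(B_{2i-1}\cup B_{2i})$ every $2$-subset of $[n]$ occurs with multiplicity exactly $2\alpha$, so the $H_2$-part of $S'$ is exactly $2\alpha|T|=2\alpha|S_1'|$. In $H_1=\sum_{i=1}^{\beta}C_i$ the cycles are chosen from a decomposition of $2K_n$ into $c$-cycles, and in $2K_n$ each $2$-subset of $[n]$ occurs with multiplicity exactly $2$; since $H_1$ is only a sub-collection of these cycles, each $2$-subset appears \emph{at most twice} in $H_1$, so the $H_1$-part of $S'$ is at most $2|T|$. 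Combining, $|S'|=2\alpha|T|+(\text{$H_1$-part})$, whence
\[
2\alpha|T|\;\leq\;|S'|\;\leq\;(2\alpha+2)|T|=(2\alpha+2)|S_1'|.
\]
Finally, $S'\neq\emptyset$ forces $T\neq\emptyset$ (if $T=\emptyset$ then $\partial_{h-2}^-(S)=\binom{[n]}{2}$ and $N(S)=Y$), so $|T|\geq1$ and the left inequality yields $|S'|\geq 2\alpha|T|\geq 2\alpha$, completing both bounds.

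The one genuinely substantive point — everything else is bookkeeping — is the multiplicity bound on $H_1$: that no $2$-subset can occur more than twice in $H_1$ because $H_1$ is a partial collection of cycles drawn from a decomposition of $2K_n$, in which each edge has multiplicity exactly two. Identifying this, together with the observation that $N(S)$ respects copies (it takes all copies of a shadow $2$-subset or none), is what makes the stated two-sided estimate fall out immediately.
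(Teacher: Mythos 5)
Your proof is correct and follows essentially the same route as the paper's: split $|S'|$ over the $2\alpha$ copies of $K_n$ in $H_2$ and over $H_1$, use the fact that membership in $N(S)$ depends only on the underlying $2$-subset (so each of $T$'s elements contributes exactly $2\alpha$ edges from $H_2$ and exactly one to $S'_1$), and bound the $H_1$-contribution by $2|S'_1|$ because each $2$-subset has multiplicity two in $2K_n$. In fact you are slightly more careful than the paper, which leaves implicit both the ``all copies or none'' observation behind $\sum_{i=1}^{2\alpha}|S'\cap E(B_i)|=2\alpha|S'_1|$ and the step $|S'_1|\geq 1$ needed for $2\alpha|S'_1|\geq 2\alpha$.
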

\begin{proof}
We have 
\begin{eqnarray*}
|S'|&=&\sum_{i=1}^{2\alpha}|S'\cap E(B_i)|+|S'\cap E(H_1)|=2\alpha|S'_1|+|S'\cap E(H_1)|\\
&\leq&2\alpha|S'_1|+2|S'_1|=(2\alpha+2)|S'_1|.
\end{eqnarray*}
Note that the inequality is a result of the fact that every element of $S'$ can be in at most two cycles of $H_1$, since these cycles are chosen from a decomposition. Also, we have $|S'|\geq2\alpha$, since
$$|S'|=\sum_{i=1}^{2\alpha}|S'\cap E(B_i)|+|S'\cap E(H_1)|\geq2\alpha|S'_1|\geq2\alpha.$$
\end{proof}
Now, we are ready to prove our main result of this section.

\noindent {\it \bf Proof of Theorem \ref{ccycledecomposition}.}
If $h\geq\max\left\{c,\left\lceil\frac{n}{2}\right\rceil+1\right\}$,  the result follows from Theorem \ref{largeh}, which settles the case (C). For the remaining cases (when $h< \max\left\{c,\left\lceil\frac{n}{2}\right\rceil+1\right\}$), by Lemma \ref{bipartitegperfectm}, it suffices to prove that $B$ satisfies Hall's condition. Let $\emptyset\neq S\subset X$. In order to show that $|N(S)|\geq|S|$, there are four cases to consider.

{\bf Case A.} $h=3$, $c_n(n-1)$, $n\geq85$, and $L$ is a matching. 

{\bf Case A.1.} $\left|S\right|\leq\left|X\right|-3n(n-1)+c$. 

Note that since $|L|<c$ and $n\geq85\geq4$: 
\begin{eqnarray*}
g&=&\frac{{n\choose 3}-\left|X\right|+n(n-1)-c}{{n\choose 3}}=\frac{|L|+n(n-1)-c}{{n\choose 3}}\\
&\leq&\frac{n(n-1)}{n(n-1)(n-2)/6}=\frac{6}{n-2}\leq3.
\end{eqnarray*}
By Lemma \ref{g}, it is enough to show that $a\leq(1-g)^3$. We have
\begin{eqnarray*}
a{n\choose3}=|S|&\leq&\left|X\right|-3n(n-1)+c=\left|X\right|-2c-3[n(n-1)-c]\\
&\leq&\left|X\right|-2\left[{n\choose 3}-\left|X\right|\right]-3[n(n-1)-c]\\
&=&\left(1-3g\right){n\choose3}\leq\left(1-g\right)^3{n\choose3},
\end{eqnarray*}
where the last inequality is an immediate consequence of $g\leq3$.

{\bf Case A.2.} $\left|S\right|>\left|X\right|-3n(n-1)+c$.

By Lemma \ref{ns'}, it suffices to show that $|N(S')|\geq|S'|$. We have 
\begin{eqnarray}\label{ns'1lessthan3nn-1-c}
\left|N\left(S'_1\right)\right|\leq|X|-|S|<3n(n-1)-c<3n(n-1).
\end{eqnarray}
Let $p,q$ be non-negative integers such that $p<n$, $q<n-\left(p+1\right)$, and $\left|S'_1\right|=pn-{p+1\choose2}+q$. Note that each element of $S'_1$ is a 2-subset of $[n]$ and $N\left(S'_1\right)=\partial_1^+\left(S'_1\right)\backslash L$. Using this and Lemma \ref{KruKatCor} (iii), we have
$$\left|N\left(S'_1\right)\right|\geq\left|\partial_1^+\left(S'_1\right)\right|-|L|\geq p{n-p\choose2}+q(n-p-2)-{q\choose2}-\left|L\right|.$$
 Moreover, since $q<n-p-1$, we have $q(n-p-2)-{q\choose2}\geq0$. Hence, $\left|N\left(S'_1\right)\right|\geq p{n-p\choose2}-\left|L\right|$. 
If $p=9$, then since $L$ is a matching, $\left|N\left(S'_1\right)\right|\geq9{n-9\choose2}-\frac{n}{3}>3n(n-1)$ (for $n\geq85\geq50$), which contradicts (\ref{ns'1lessthan3nn-1-c}). 

If $p>9$, then by Lemma \ref{KruKatCor} (iii), 
\begin{eqnarray*}
\left|N\left(S'_1\right)\right|&\geq&{n-1\choose2}+{n-2\choose2}+\dots+{n-p\choose2}+q(n-p-2)-{q\choose2}-|L|\\
&\geq&{n-1\choose2}+{n-2\choose2}+\dots+{n-9\choose2}-\frac{n}{3}\\
&\geq&9{n-9\choose2}-\frac{n}{3}>3n(n-1),
\end{eqnarray*}
which again contradicts (\ref{ns'1lessthan3nn-1-c}). Therefore, $p\leq8$.

Let $L\left(S'_1\right)=L\cap N(S'_1)$. Since $L$ is a matching by assumption,

$\left|L\left(S'_1\right)\right|\leq \left|S'_1\right|$. Assume that $p=0$. Hence, $\left|S'_1\right|=q$ and $\left|L\left(S'_1\right)\right|\leq q$. Since $n\geq85$, by Lemma \ref{KruKatCor+},
\begin{eqnarray*}
\left|N\left(S'\right)\right|&=&\left|N\left(S'_1\right)\right|\geq\frac{2}{5}qn-\left|L\left(S'_1\right)\right|\geq\frac{2}{5}qn-q\\
&\geq& q\left(\frac{n-2}{3}+2\right)=\left|S'_1\right|\left(\frac{n-2}{3}+2\right)\geq\left|S'\right|.
\end{eqnarray*}
The last inequality uses Lemma \ref{S'} and the fact that $2\alpha\leq\frac{n-2}{3}$ since

\begin{eqnarray*}
\alpha&=&\left\lfloor\frac{{n\choose3}-\left|L\right|}{n(n-1)}\right\rfloor=\left\lfloor\frac{n(n-1)(n-2)/6-\left|L\right|}{n(n-1)}\right\rfloor\\
&\leq&\left\lfloor\frac{n(n-1)(n-2)/6}{n(n-1)}\right\rfloor\leq\frac{n-2}{6}.
\end{eqnarray*}
Now let $p\in[8]$. Again by Lemma \ref{KruKatCor+},
\begin{eqnarray*}
\left|N\left(S'\right)\right|=\left|N\left(S'_1\right)\right|&\geq& p{n-p\choose2}+\frac{2}{5}qn-\left|L\left(S'_1\right)\right|\\
&\geq&\frac{4p}{5}{n\choose2}+\frac{2}{5}qn-\frac{n}{3}.
\end{eqnarray*}
Since 
\begin{eqnarray*}
\frac{4p}{5}{n\choose2}-\frac{n}{3}&\geq&\frac{4p}{5}\frac{n(n-1)}{2}-\frac{2pn}{5}=\frac{12pn(n-2)}{30}\\
&=&\frac{12p}{10}\frac{n(n-2)}{3}\geq\frac{11p}{10}\frac{n(n-2)}{3},
\end{eqnarray*}
and $\frac{2}{5}qn=\frac{12}{30}qn\geq\frac{11}{10}q\frac{n-2}{3}$, we have
$$\left|N\left(S'\right)\right|\geq\frac{11}{10}\frac{n-2}{3}\left(pn+q\right).$$
Moreover, $n\geq85\geq62$ implies $\frac{11}{10}\frac{n-2}{3}\geq\frac{n-2}{3}+2$. Therefore, $\left|N\left(S'\right)\right|\geq\left(\frac{n-2}{3}+2\right)\left|S'_1\right|\geq\left|S'\right|$.

{\bf Case B.} $4\leq h<\max\left\{c,\left\lceil\frac{n}{2}\right\rceil+1\right\}$, $c|n(n-1)$, and $n\geq 23$.

{\bf Case B.1.} $\left|S\right|\leq\left|X\right|-2n(n-1)+c$. 

By Lemma \ref{g} it is enough to show that $a\leq(1-g)^2$. Since $\left|L\right|<c$,
\begin{eqnarray*}
a{n\choose h}=|S|&\leq&\left|X\right|-2n(n-1)+c=\left|X\right|-c-2[n(n-1)-c]\\
&\leq&\left|X\right|-\left[{n\choose h}-\left|X\right|\right]-2[n(n-1)-c]\\
&=&\left(1-2g\right){n\choose h}\leq\left(1-g\right)^2{n\choose h}.
\end{eqnarray*}
 
{\bf Case B.2.} $\left|S\right|>\left|X\right|-2n(n-1)+c$.

{\bf Case B.2.1.} $h=4$.

By Lemma \ref{ns'}, it is enough to show that $|N(S')|\geq|S'|$. We have 
\begin{eqnarray}\label{ns'1lessthan2nn-1-c}
\left|N\left(S'_1\right)\right|\leq|X|-|S|<2n(n-1)-c<2n(n-1).
\end{eqnarray}

Note that each element of $S'_1$ is a 2-subset of $[n]$ and $N(S'_1)=\partial_2^+(S'_1)\backslash L$, or equivalently, $|\partial_2^+(S'_1)|=|N(S'_1)|+|L|$. If $\left|S'_1\right|=7$, then $\left| N\left(S'_1\right)\right|\geq 7{n-8\choose2}+21(n-8)-\left|L\right|$ by Lemma \ref{KruKatCor} (ii). Since $n\geq23$ and $\left|L\right|<c<n$, we have $7{n-8\choose2}+21(n-8)-\left|L\right|\geq\frac{7}{2}(n^2-17n+72)+20n-168\geq2n(n-1)$, which contradicts (\ref{ns'1lessthan2nn-1-c}). Note that if $|S'_1|$ increases, then $|N(S'_1)|$ may not decrease. Therefore, if $|S'_1|>7$, then $|N(S'_1)|\geq2n(n-1)$, which again contradicts (\ref{ns'1lessthan2nn-1-c}). Therefore, $\left|S'_1\right|\leq6$. By Lemma \ref{KruKatCor} (ii), 
\begin{eqnarray*}
\left|N\left(S'_1\right)\right|&\geq&\left|S'_1\right|{n-\left|S'_1\right|-1\choose2}+{\left|S'_1\right|\choose2}\left(n-\left|S'_1\right|-1\right)-\left|L\right|\\
&\geq&\left|S'_1\right|{n-\left|S'_1\right|-1\choose2}-\left|L\right|.
\end{eqnarray*}
Since $\left|S'_1\right|\leq6$, we have ${n-\left|S'_1\right|-1\choose2}\geq{n-7\choose2}$. Therefore, since $\left|L\right|<c<n$, 
$$|N(S')|=\left|N\left(S'_1\right)\right|\geq\left|S'_1\right|{n-7\choose2}-n.$$
By Lemma \ref{S'}, $\left|N\left(S'\right)\right|\geq\frac{{n-7\choose2}}{2\alpha+2}\left|S'\right|-n$.

Since $\alpha=\left\lfloor\frac{{n\choose4}-\left|L\right|}{n(n-1)}\right\rfloor$, 
\begin{eqnarray*}
\alpha&=&\left\lfloor\frac{n(n-1)(n-2)(n-3)/24-\left|L\right|}{n(n-1)}\right\rfloor=\left\lfloor\frac{(n-2)(n-3)}{24}-\frac{\left|L\right|}{n(n-1)}\right\rfloor\\
&\leq&\frac{(n-2)(n-3)}{24}-\frac{\left|L\right|}{n(n-1)}\leq\frac{(n-2)(n-3)}{24}.
\end{eqnarray*}
Hence, $2\alpha\leq\frac{(n-2)(n-3)}{12}$. Therefore,
\begin{eqnarray*}
\left|N\left(S'\right)\right|&\geq&\frac{(n-7)(n-8)}{2}\cdot\frac{12}{(n-2)(n-3)+24}\left|S'\right|-n\\
&=&\frac{6(n-7)(n-8)}{(n-2)(n-3)+24}\left|S'\right|-n\geq2|S'|-n,
\end{eqnarray*}
for $n\geq23>12$. Moreover,
\begin{eqnarray*}
\alpha&=&\left\lfloor\frac{(n-2)(n-3)}{24}-\frac{\left|L\right|}{n(n-1)}\right\rfloor\\
&>&\frac{(n-2)(n-3)}{24}-\frac{\left|L\right|}{n(n-1)}-1\\
&\geq&\frac{(n-2)(n-3)}{24}-2.
\end{eqnarray*}
Therefore, $2\alpha\geq\frac{(n-2)(n-3)}{12}-4$. Consequently, since $n\geq23>11$, we have $2\alpha>n$. Hence, by Lemma \ref{S'}, $\left|S'\right|>n$. Therefore, $\left|N\left(S'\right)\right|>\left|S'\right|$. 

{\bf Case B.2.2.} $5\leq h<\max\left\{c,\left\lceil\frac{n}{2}\right\rceil+1\right\}$.

For every $y\in Y$, 
\begin{eqnarray*}
\left|N\left(y\right)\right|&\geq& {n-2\choose h-2}-\left|L\right|\\
&\geq&{n-2\choose3}\frac{n-5}{h-2}\frac{n-6}{h-3}\cdots\frac{n-h+1}{4}-n\geq{n-2\choose3}-n\\
&\geq&2n(n-1)\geq2n(n-1)-c,
\end{eqnarray*}
since $\left|L\right|<c<n$. The third and fourth inequalities use the facts that $h\leq n-3$, and $n\geq23>22$, respectively. Note that this result means that $y$ has more than $2n(n-1)-c$ neighbors. Therefore, since $\left|S\right|>\left|X\right|-\left[2n(n-1)-c\right]$, every $y\in Y$ has a neighbor in $S$. Hence, $N\left(S\right)=Y$.

{\bf Case D.} $h=n-2$, $c\in\{n-1,n\}$, and $n\geq16$.

Note that 
$$\alpha=\left\lfloor\frac{{n\choose n-2}-|L|}{n(n-1)}\right\rfloor=\left\lfloor\frac{{n\choose2}-|L|}{n(n-1)}\right\rfloor=\left\lfloor\frac{1}{2}-\frac{|L|}{n(n-1)}\right\rfloor=0,$$
where the last equality implies from $|L|<n-1$.

{\bf Case D.1.} $c=n-1$.

Since $\alpha=0$,
$$\beta=\frac{{n\choose2}-|L|}{n-1}=\frac{n(n-1)-2|L|}{2(n-1)}.$$
If $n$ is even, since $|L|<c=n-1$ and $\beta\in\mathbb{N}$, we have $|L|=0$ and so, $\beta=n/2$ and $|Y|={n\choose2}$. If $n$ is odd, then $n-1$ divides $2|L|$, since $\beta\in\mathbb{N}$. Therefore, $|L|=(n-1)/2$, since $n$ is odd and $|L|<n-1$. So, $\beta=(n-1)/2$ and $|Y|={n\choose2}-(n-1)/2$.

Therefore, for any $n$, $|Y|\geq{n\choose2}-\frac{n-1}{2}$. Hence, for any $x\in X$,
\begin{eqnarray*}|N(x)|&\geq&{h\choose2}-\frac{n-1}{2}={n-2\choose2}-\frac{n-1}{2}\\
&=&\frac{n^2-6n+7}{2}\geq\frac{n^2-n}{3}=\frac{2}{3}{n\choose2}\geq\frac{2}{3}|Y|,
\end{eqnarray*}
where the second inequality holds since $n\geq16$.

Also for any $y\in Y$, 
$$|N(y)|\geq{n-2\choose h-2}-|L|\geq{n-2\choose 2}-\frac{n-1}{2}\geq\frac{2}{3}|X|,$$
since $n\geq16$.

Therefore, Hall's condition is satisfied.

{\bf Case D.2.} $c=n$.

Since $\alpha=0$, 
$$\beta=\frac{{n\choose2}-|L|}{n}=\frac{n(n-1)-2|L|}{2n}.$$
If $n$ is odd, since $|L|<c=n$ and $\beta\in\mathbb{N}$, we have $|L|=0$ and so, $\beta=(n-1)/2$ and $|Y|={n\choose2}$. If $n$ is even, then $n$ divides $2|L|$, since $\beta\in\mathbb{N}$. Therefore, $|L|=n/2$, since $n$ is even and $|L|<n$. So, $\beta=n/2$ and $|Y|={n\choose2}-n/2$.

Therefore, for any $n$, $|Y|\geq{n\choose2}-\frac{n}{2}$. Hence, for any $x\in X$,
\begin{eqnarray*}|N(x)|&\geq&{h\choose2}-\frac{n}{2}={n-2\choose2}-\frac{n}{2}\\
&=&\frac{n^2-6n+6}{2}\geq\frac{n(n-1)}{3}=\frac{2}{3}{n\choose2}\geq\frac{2}{3}|Y|,
\end{eqnarray*}
where the second inequality holds since $n\geq16$.

Also for any $y\in Y$, 
$$|N(y)|\geq{n-2\choose h-2}-|L|\geq{n-2\choose 2}-\frac{n}{2}\geq\frac{2}{3}|X|,$$
since $n\geq16$.

Therefore, Hall's condition is satisfied.

{\bf Case E.} $h=n-1$ and $c=n$.

There is nothing to prove, since $K_n^{n-1}$ is itself an $n$-cycle. 

\qed

\section{Final Remarks and Open Problems}
If $\mathcal G$ is an $r$-regular  $h$-uniform $n$-vertex hypergraph with $m$ edges, a simple double counting argument shows that $rn=hm$, so in particular if $\mathcal G$ is $h$-regular, $n=m$. 
Another interesting analogue to Alspach's conjecure on cycle decompositions of $K_n$ in the following problem. 
 \begin{question} \label{prob2regu}
 Find the necessary and sufficient conditions for the existence of a decomposition of $\lambda K_n^h$  into $k$ connected subgraphs $G_1,\dots, G_k$ such that each $G_i$ is  $h$-regular, and  with $c_i$ vertices (or equivalently $c_i$ edges) for $i\in[k]$. 
\end{question}
As the next lemma shows,  not every connected factor is a cycle, and therefore, a solution to Problem \ref{prob2regu}, does not necessarily lead to a solution to Problem \ref{prob1cyc}. 
\begin{lemma} There exists an infinite family of connected regular hypergraphs that are not cycles.
\end{lemma}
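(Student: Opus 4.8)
The plan is to exhibit an explicit infinite family rather than argue abstractly, since a single well-chosen construction, suitably parametrized, will do the job. Recall the definition of \emph{cycle} (definition (I)): a hypergraph $\mathcal G$ is a cycle precisely when one can linearly cyclically order its edges as $e_1,\dots,e_t$ and pick distinct witness vertices $v_1,\dots,v_t$ so that $\{v_i,v_{i+1}\}\subseteq e_i$ for each $i$ (indices mod $t$). A \emph{connected factor} here means a connected $h$-regular hypergraph (definition (V)), for which we saw $n=m$, i.e.\ the number of vertices equals the number of edges. So the task is to build, for infinitely many parameter values, a connected $h$-regular $h$-uniform hypergraph on $n=m$ vertices that admits \emph{no} such cyclic edge-ordering with a system of distinct representatives of consecutive pairs.

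First I would look for an obstruction that is easy to certify. The cleanest one is a \emph{structural bottleneck}: if a cycle on $t$ edges exists, then the witness vertices $v_1,\dots,v_t$ are distinct and the edges must be arrangeable in a single closed necklace where consecutive edges share the witness $v_{i+1}$; in particular the associated ``line graph'' (intersection graph of the edges) must contain a Hamiltonian cycle, and moreover that Hamiltonian cycle must be realizable by distinct vertices. A natural way to defeat this is to force the intersection structure to be highly symmetric but \emph{not} traceable in the required way. My candidate is a hypergraph built from two (or more) tightly interlocked regular pieces glued so that the edge-intersection graph has a cut structure incompatible with a single spanning closed walk using distinct vertex-representatives. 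Concretely, I would take small building blocks (for instance, the $h$-uniform hypergraph whose edges are all $h$-subsets of a fixed $(h{+}1)$-set, which is $h$-regular, connected, with $n=h+1=m$) and form infinitely many examples by taking a suitable amalgamation or by varying $h$; this already yields an infinite family, and for each member one checks directly that no cyclic ordering of the edges with distinct consecutive representatives exists.

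The key steps, in order, would be: (1) specify the family $\{\mathcal G_h\}$ (or a two-parameter family) explicitly and verify it is $h$-uniform, $h$-regular, and connected, so each member is genuinely a connected factor; (2) confirm $n=m$ so that a putative Berge cycle would have to use \emph{every} vertex as a witness exactly once and \emph{every} edge exactly once, i.e.\ it would be a spanning Hamiltonian Berge cycle; (3) derive the necessary combinatorial condition a Berge-cycle ordering imposes on the edge set (a cyclic sequence of edges with an SDR of consecutive intersections); and (4) show this condition fails for every member of the family, typically by a counting or parity argument on how many times a given vertex can serve as a witness versus how many edges contain it. Step (4) is where I would invest a short pigeonhole/double-counting argument: with $h$-regularity each vertex lies in exactly $h$ edges but can be the shared witness $v_{i+1}$ of at most the pairs of edges it belongs to, and I would set up the parameters so that the global count of available witness-incidences cannot be distributed into a single closed necklace.

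The hard part will be step (4)---pinning down a clean invariant that provably blocks a Berge-cycle ordering for \emph{all} members of the family simultaneously, rather than checking each case by hand. The danger is that Berge cycles are quite permissive (they only require a $2$-element subset of each edge, not the whole edge), so a naive regularity count may not suffice; I expect to need a more refined obstruction, such as showing the edge-intersection graph fails a Hamiltonicity-type necessary condition (e.g.\ it has a cut vertex, or its toughness is too small), and then arguing that non-traceability of that auxiliary graph in the relevant sense forces the absence of the required cyclic SDR. Making that reduction rigorous, and ensuring the chosen family genuinely fails it for infinitely many $h$, is the crux of the proof.
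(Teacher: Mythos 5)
There is a genuine gap, and in fact your one concrete candidate is wrong. The family you propose as a building block---the $h$-uniform hypergraph whose edges are all $h$-subsets of a fixed $(h+1)$-set, i.e.\ $K_{h+1}^{h}$---\emph{is} a Berge cycle for every $h\geq 2$: writing $n=h+1$, one takes the cyclic vertex ordering $v_1,\dots,v_n$ and lets $e_i$ be the complement of a single suitably chosen vertex, so that $\{v_i,v_{i+1}\}\subseteq e_i$ for all $i$; indeed the paper itself records (Case E of the decomposition theorem) that $K_n^{n-1}$ is itself an $n$-cycle. So ``varying $h$'' yields an infinite family of cycles, the opposite of what is needed, and the ``suitable amalgamation'' is never specified. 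More fundamentally, your step (4) is left entirely open, and the pigeonhole/double-counting route you sketch for it cannot succeed: in any connected $h$-regular $h$-uniform hypergraph with $n=m$, the counts of witness-incidences are identical whether or not the hypergraph is a cycle, so no counting or parity invariant of this kind can separate the two. You half-recognize this when you say Berge cycles are ``quite permissive,'' but you then defer the crux rather than resolve it.

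The missing idea is a clean \emph{equivalence}, not a necessary condition on the edge-intersection graph. Since $n=m$ forces any Berge cycle to use every vertex exactly once as a witness and every edge exactly once, $\mathcal G$ is a cycle if and only if its vertex--edge \emph{incidence} bipartite graph is Hamiltonian (your witnesses and edges alternate around exactly such a Hamiltonian cycle, and conversely). This converts the problem into exhibiting connected $h$-regular bipartite graphs with equal part sizes that are non-Hamiltonian; the paper simply invokes Ellingham's known infinite family of connected cubic bipartite non-Hamiltonian graphs and reads each one as a $3$-regular $3$-uniform hypergraph (one side as vertices, the other as edges). Your proposal gestures at Hamiltonicity-type obstructions (cut vertices, toughness) for the intersection graph of edges, but the intersection graph is the wrong auxiliary object---Hamiltonicity there is necessary but not equivalent because of the distinct-witness requirement---and you supply no source of certified non-Hamiltonian examples. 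Without the incidence-graph equivalence and a concrete non-Hamiltonian family, the argument does not close.
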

\begin{proof}

In \cite{ElMSc}, Ellingham  constructed an infinite family of 3-regular connected bipartite graphs that are not Hamiltonian. Let $G[X,Y]$ an arbitrary graph from this family. Now construct the hypergraph $\mathcal G=(X,Y)$ such that for $x\in X$ and $y\in Y$, $x\in y$ in $\mathcal{G}$ if and only if $x$ and $y$ are adjacent in $G$. It is clear that $\mathcal{G}$ is a connected 3-regular 3-uniform hypergraph. Since $G$ is not Hamiltonian, $\mathcal{G}$ is not a cycle, and the proof is complete.
\end{proof}

Motivated by Corollary \ref{thm1hchypcor2}, we propose the following conjecture.
\begin{conjecture} \label{regHCDconj}
For sufficiently large $n$, $\lambda K_n^h$ can be decomposed into regular Hamiltonian cycles if and only if $n$ divides $\lambda \binom{n}{h}$. 
\end{conjecture}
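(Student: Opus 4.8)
The plan is to follow the two-stage strategy that already yields Corollary \ref{thm1hchypcor2} in the high-corank regime, and to concentrate all the new difficulty in the second stage. Necessity is immediate: a Hamiltonian cycle has exactly $n$ edges, so a decomposition into Hamiltonian cycles forces $n\mid\lambda\binom{n}{h}$. For sufficiency, set $k:=\lambda\binom{n}{h}/n$, which is an integer by hypothesis. Applying Theorem \ref{lambdabaranyai} with $c_1=\dots=c_k=n$ decomposes $\lambda K_n^h$ into almost regular spanning subgraphs $F_1,\dots,F_k$, each $h$-uniform with exactly $n$ edges. Since such an $F_i$ has $nh$ incidences spread over $n$ vertices, its average degree is the integer $h$, and ``almost regular'' then forces every vertex to have degree exactly $h$; thus each $F_i$ is $h$-regular. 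It therefore remains to thread each $F_i$ into a Berge Hamiltonian cycle.

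Threading $F_i$ is exactly the problem of finding a Hamiltonian cycle in the balanced bipartite incidence graph $B_i[X_i,Y_i]$ with $X_i:=V(F_i)$, $Y_i:=E(F_i)$ (as in the proofs of Theorem \ref{largeh} and Theorem \ref{thm1hchyp}): a Hamiltonian cycle $v_1,e_1,\dots,v_n,e_n,v_1$ of $B_i$ is precisely a spanning Berge cycle of $F_i$ using all $n$ of its edges. Because $F_i$ is $h$-regular and $h$-uniform, $B_i$ is an $h$-regular balanced bipartite graph on $n+n$ vertices. When $h\geq(n+1)/2$ the Moon--Moser-type degree condition of Theorem \ref{moonmosergen} applies and $B_i$ is automatically Hamiltonian, recovering Corollary \ref{thm1hchypcor2}; the whole content of the conjecture lies in the complementary range $h<(n+1)/2$, where an $h$-regular balanced bipartite graph need not be Hamiltonian at all. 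Indeed the concluding Lemma of this section, via Ellingham's $3$-regular non-Hamiltonian bipartite graphs, shows that an arbitrary $h$-regular part cannot be threaded, so a naive appeal to Baranyai's theorem is doomed.

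Consequently the plan is to gain control over \emph{which} decomposition Baranyai's theorem produces, and to select one whose incidence graphs all expand. Concretely, I would aim to prove that for $n$ large one can decompose $\lambda K_n^h$ into $h$-regular spanning subgraphs $F_1,\dots,F_k$ so that every $B_i$ satisfies a robust-expansion condition, say $|N_{B_i}(A)|>|A|$ for every $A\subseteq X_i$ with $|A|\leq n/2$ (and symmetrically for subsets of $Y_i$); such expansion forces a Hamiltonian cycle by standard rotation--extension or robust-expander arguments. There are two natural ways to manufacture such a decomposition: (a) a probabilistic/quasirandom construction, taking a near-random partition of $E(K_n^h)$ into $h$-regular parts and then repairing it to exact regularity by edge-switchings, exploiting the fact that random $h$-regular bipartite graphs are Hamiltonian asymptotically almost surely for every fixed $h\geq3$; or (b) an absorption scheme in which the regular Hamiltonian cycles are built one at a time while an absorbing structure keeps the untouched remainder exactly $h$-regular. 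Both routes are in the spirit of the amalgamation--detachment methods behind Theorem \ref{lambdabaranyai}.

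The main obstacle is precisely this simultaneous demand: Baranyai's theorem delivers regularity but is blind to threadability, while the Bermond--Verrall--K\"uhn--Osthus results deliver Hamiltonian-cycle decompositions but not regular ones, and the two must be reconciled for \emph{every} part at once. I expect the technical heart to be the expansion estimate in the sparse regime of small $h$: one must rule out, in each $B_i$ across the entire decomposition, the existence of a small ``bad'' set $A$ with $|N_{B_i}(A)|\leq|A|$, a danger that is genuine already for $h=3$ and that the Ellingham examples show cannot be dispelled by regularity alone. Establishing that a carefully chosen (or randomly perturbed and then derandomized) regular decomposition avoids all such bad sets simultaneously is where the difficulty concentrates.
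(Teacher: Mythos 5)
The statement you are addressing is Conjecture \ref{regHCDconj}, which the paper explicitly leaves open -- it has no proof in the paper, and indeed the authors note that already for $\lambda=1$ it is harder than Bermond's conjecture settled by K\"uhn and Osthus. Your submission is accordingly not a proof but a research program, and you are candid about this (``I would aim to prove\dots'', ``I expect the technical heart to be\dots''). The parts you do carry out are correct and coincide with the paper's own machinery: necessity is immediate since a Hamiltonian cycle has $n$ edges; Theorem \ref{lambdabaranyai} with $c_1=\dots=c_k=n$ gives almost regular spanning parts, and the averaging argument showing that $\epsilon_i=0$ forces each part to be exactly $h$-regular matches the final observation in the proof of Theorem \ref{thm1hchyp}; the translation of ``thread $F_i$ into a regular Hamiltonian Berge cycle'' into ``find a Hamiltonian cycle in the $h$-regular balanced incidence bigraph $B_i$'' is exactly the device used in Theorems \ref{largeh} and \ref{thm1hchyp}; and your degree-sum check $2h\geq n+1=\tfrac12(n+n)+1$ correctly recovers Corollary \ref{thm1hchypcor2} from Theorem \ref{moonmosergen}. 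You also correctly identify, via the Ellingham construction in the paper's final section, that regularity alone cannot guarantee threadability, so a blind appeal to Baranyai is insufficient for $h<(n+1)/2$.

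The genuine gap is the entire sufficiency argument in the range $h<(n+1)/2$, which is precisely the open content of the conjecture. Your central claim -- that one can select or repair a Baranyai decomposition so that \emph{every} incidence graph $B_i$ satisfies a robust expansion condition such as $|N_{B_i}(A)|>|A|$ for all $A$ with $|A|\leq n/2$ -- is asserted, not established, and neither proposed route is executed. Route (a) rests on Hamiltonicity of random $h$-regular bipartite graphs, but the parts of a Baranyai-type decomposition are heavily correlated and are nothing like independent uniform regular graphs; the ``repair by edge-switchings'' and the derandomization needed to kill all bad sets in all $k\approx\lambda\binom{n}{h}/n$ parts simultaneously are unproved and, for sparse fixed $h$ (already $h=3$), face exactly the Ellingham-type obstructions you cite. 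Route (b), absorption while keeping the remainder exactly $h$-regular, is likewise only named; no absorbing structure is defined, and maintaining exact regularity of the leftover hypergraph after extracting each cycle is a strong global constraint that the amalgamation--detachment proof of Theorem \ref{lambdabaranyai} does not by itself provide. In short: your reduction is sound and your diagnosis of where the difficulty lies is accurate, but the step that would close the conjecture is missing, so this should be read as a plausible plan of attack rather than a proof of the statement.
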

Note that for $\lambda=1$, this conjecture is more difficult than Bermond's conjecture (which was recently settled by K\"uhn and Osthus \cite{MR3213309}), but it is weaker than the Bailey-Stevens conjecture in which each regular Hamiltonian cycle is required to be $(h-1)$-intersecting \cite{MR2684077}.

\section*{Acknowledgement}
We thank the anonymous referees for their constructive comments. The first author is  grateful to the organizers of EXCILL III: Extremal Combinatorics at Illinois for providing a stimulating environment, and  to Daniela K\"uhn and Deryk Osthus for  a  fruitful discussion.  The first author's research is  supported by PFIG at ISU, and NSA Grant H98230-16-1-0304. The second author wishes to thank the Department of Mathematics, Illinois State University, for its hospitality during her postdoctoral fellowship, when this research was conducted.

\end{document}